\documentclass[10pt]{amsart}


\usepackage{amsthm,amscd,amssymb,amsmath,tikz-cd,enumerate}
\usepackage[mathscr]{eucal}


\theoremstyle{definition}

\theoremstyle{plain}
\newtheorem{defn}{Definition}[section]

\newtheorem{lemma}[defn]{Lemma}
\newtheorem{theorem}[defn]{Theorem}
\newtheorem{proposition}[defn]{Proposition}
\newtheorem{corollary}[defn]{Corollary}
\newtheorem{conjecture}[defn]{Conjecture}

\theoremstyle{remark}
\newtheorem{remark}[defn]{Remark}

\newcommand{\HH}{{H}}
\newcommand{\rank}{{\mathrm {rank} \,}}
\newcommand{\Pic}{{\text {Pic}}}
\newcommand{\Hom}{{\mathrm {Hom}}}
\newcommand{\Gr}{\mathrm{Gr}}

\linespread{1.05}
\usepackage[margin=1in]{geometry}
\setcounter{tocdepth}{2}
\setcounter{secnumdepth}{3}

\begin{document}
\title{{ Hodge numbers of Landau--Ginzburg models}}

\author{Andrew Harder}
\address{Department of Mathematics, Lehigh University, Bethlehem, Pennsylvania, USA 18015}
\email[A.~Harder]{anh318@lehigh.edu}

\keywords{Algebraic geometry, mirror symmetry, Hodge theory, toric varieties}

\begin{abstract}
We study the Hodge numbers $f^{p,q}$ of Landau--Ginzburg models as defined by Katzarkov, Kontsevich, and Pantev. First we show that these numbers can be computed using ordinary mixed Hodge theory, then we give a concrete recipe for computing these numbers for the Landau--Ginzburg mirrors of Fano threefolds. We finish by proving that for a crepant resolution of a Gorenstein toric Fano threefold $X$ there is a natural LG mirror $(Y,w)$ so that $h^{p,q}(X) = f^{3-q,p}(Y,w)$.
\end{abstract}
\maketitle

\section{Introduction}

The goal of this paper is to study Hodge theoretic invariants associated to the class of Landau--Ginzburg models which appear as the mirrors of Fano varieties in mirror symmetry.

Mirror symmetry is a phenomenon that arose in theoretical physics in the late 1980s. It says that to a given Calabi--Yau variety $W$ there should be a dual Calabi--Yau variety $W^\vee$ so that the A-model TQFT on $W$ is equivalent to the B-model TQFT on $W^\vee$ and vice versa. The A- and B-model TQFTs associated to a Calabi--Yau variety are built up from symplectic and algebraic data respectively. Consequently the symplectic geometry of $W$ should be related to the algebraic geometry of $W^\vee$ and vice versa. A number of precise and interrelated mathematical approaches to mirror symmetry have been studied intensely over the last several decades. Notable approaches to studying mirror symmetry include homological mirror symmetry \cite{kont}, SYZ mirror symmetry \cite{syz}, and the more classical enumerative mirror symmetry. A basic expectation common to most (perhaps all) forms of mirror symmetry is ``topological mirror symmetry''. To a projective algebraic variety $X$, we may associate its Hodge numbers, $h^{p,q}(X) := \dim H^q(X,\Omega^p_X)$. We say that  two $d$-dimensional projective Calabi--Yau manifolds $W$ and $W^\vee$ are {\em topologically mirror dual} if 
\begin{equation}\label{hnms}
h^{p,q}(W) = h^{d-q,p}(W^\vee)
\end{equation}
for all $p$ and $q$. One of the major problems that is discussed in the mirror symmetry literature is the question of how, given a Calabi--Yau manifold $W$, one produces a variety $W^\vee$ that is mirror dual of $W$ \cite{bat,bor,gs,dht}. These constructions are often synthetic and combinatorial, and they rarely address the question of whether $W$ and $W^\vee$ are in fact mirror dual in any sophisticated sense. One of the major advantages of topological mirror symmetry is that it provides a simple test of whether a given construction produces a mirror object or not.

More recently, there has been significant interest in extensions of mirror symmetry for manifolds which are not Calabi--Yau. An example of this, which goes back at least to work of Givental \cite{giv} in the mathematical literature, is mirror symmetry for Fano manifolds. While mirror symmetry for Calabi--Yau manifolds predicts that Calabi--Yau manifolds appear in mirror pairs, mirror symmetry for Fano manifolds predicts that there is a mirror relationship between Fano manifolds and objects called Landau--Ginzburg models. A {\em Landau--Ginzburg model} (abbreviated as {\em LG model} from this point on) is, in its broadest sense, simply a complex manifold $Y$ equipped with a holomorphic map $w$ from $Y$ to $\mathbb{C}$, and a complexified K\"ahler form $\omega_Y$. Again, there are several precise formulations of mirror symmetry for Fano manifolds and LG models which follow roughly the same lines as mirror symmetry for projective Calabi--Yau manifolds. For instance, Auroux, Katzarkov, and Orlov have studied homological mirror symmetry for del Pezzo surfaces, \cite{ako2}, and Auroux has studied SYZ mirror symmetry \cite{aur1} for projective varieties with effective anticanonical divisor. 

A natural question to ask is: if $X$ is a Fano manifold which is mirror to a LG model $(Y,w)$, is there an analogue of (\ref{hnms}) relating Hodge theoretic data on $X$ to Hodge theoretic data on $(Y,w)$? 

To pose this question properly, we should start by being more precise about the Hodge theoretic data that should appear in our putative analogue of (\ref{hnms}). Since $X$ is a projective manifold, it is natural to take the Hodge numbers $h^{p,q}(X)$ to be its corresponding Hodge theoretic data. 

The appropriate Hodge-theoretical data to associate to a LG model is a little less clear. There is a long history of studying the Hodge theory of pairs $(Y,w)$, but usually this data is in the form of a bundle on $\mathbb{C}$ with connection, along with certain decorations (e.g. Brieskorn lattices \cite{odl}, TERP structures \cite{hert}, non-commutative Hodge structures \cite{kkp1}). At first glance, it is unclear how to extract clean numerical invariants from these structures that could reflect the Hodge numbers of $X$, so in \cite{kkp2} Katzarkov, Kontsevich and Pantev give a definition of invariants of LG models which they expect to play the role of Hodge numbers. We will call these invariants the {\em KKP Hodge numbers} of $(Y,w)$ and write them as $f^{p,q}(Y,w)$ (see Definition \ref{def:kkphn} for details). Their relationship with the Hodge theoretic invariants of LG models mentioned above was explained in work of Esnault, Sabbah, and Yu \cite{esy}, as well as in work of Shamoto \cite{sham}.

Katzarkov, Kontsevich, and Pantev \cite{kkp2} argue that if $X$ is a Fano manifold of dimension $d$ and if a LG model $(Y,w)$ is homologically mirror to $X$, then a version of (\ref{hnms}) should hold between $X$ and $(Y,w)$. Precisely, we expect that
\begin{equation}\label{tmdfano}
h^{p,q}(X) = f^{d-p,q}(Y,w).
\end{equation}
If a pair composed of a Fano variety $X$ and an LG model $(Y,w)$ satisfy (\ref{tmdfano}), we will say that they are {\em topologically mirror dual}. Our goal in this article is to study KKP Hodge numbers and explain how they can be computed in concrete examples. In \cite{lp}, Lunts and Przyjalkowski show that del Pezzo surfaces are topologically mirror dual to their homological mirror duals, but to our knowledge, this paper contains the first complete computation of KKP Hodge numbers in dimension greater than 2. In a different framework, related computations have been done by Gross, Katzarkov, and Ruddat \cite{gkr}, relating the Hodge numbers of hypersurfaces in toric varieties (with no restrictions on Kodaira dimension) to the Hodge numbers of mirror Landau--Ginzburg models. Their setup uses a different approach to the Hodge theory of Landau--Ginzburg models, and the Landau-Ginzburg mirrors that they analyze have dimension greater than that of the original variety. The relationship between their setup and ours is discussed in the case of the cubic threefold in $\mathbb{P}^4$ in an unpublished preprint version of \cite{gkr}. The reader may consult \cite[Section 7]{gkr2} for details.

We will focus on the case where the map $w$ is proper, and we obtain our strongest results in the case where $\dim Y = 3$ and $Y$ and admits a compactification of a certain type (see Definition \ref{defn:wf} for details). As we will explain in Section \ref{sect:expectations}, any Fano threefold is expected to have mirror LG model of this type, so this provides a complete toolbox for studying the KKP Hodge numbers of mirrors to Fano threefolds. We exhibit this by computing the KKP Hodge numbers of a class of LG models which are mirror to certain toric threefolds. The results in this paper have recently been used by Cheltsov and Przyjalkowski to compute the KKP Hodge numbers of a more complicated class of 3-dimensional LG models \cite{cp}.

\subsection*{Outline} This paper is organized as follows. Section \ref{sect:over} contains most of the necessary background for the rest of the paper. We will begin by reviewing several facts about mixed Hodge structures that will be applied later on. We will then briefly discuss mirror symmetry for Fano manifolds and LG models, solidify our notation regarding LG models, and define KKP Hodge numbers.

Section \ref{sect:hdg} is dedicated to proving some general preliminary results about KKP Hodge numbers. We first show that the KKP Hodge numbers can be computed using classical mixed Hodge theory. In \cite{kkp2}, it is shown that $\sum_{p+q = i} f^{p,q}(Y,w) = h^i(Y,V)$, where $h^{i}(Y,V)$ denotes the dimension of the $i^\mathrm{th}$ relative cohomology of $Y$ with respect to a generic smooth fiber $V$ of $w$. The group $H^i(Y,V)$ itself bears a mixed Hodge structure, hence a Hodge filtration. We will show in Theorem \ref{thm:filt} that the dimensions of the graded pieces of this Hodge filtration agree with $f^{p,q}(Y,w)$. This result essentially follows from a careful reading of work of Katzarkov, Kontsevich, and Pantev \cite[Claim 2.22]{kkp2}. We will then prove two theorems that are likely known to experts, but do not seem to appear in the literature anywhere. First, in Theorem \ref{thm:poincare}, we will prove that if $(Y,w)$ is a proper LG model, $V$ is a smooth fiber of $w$ and $\dim Y = d$, then $h^i(Y,V) = h^{2d-i}(Y,V)$ for all $i$. Second, we will show how to compute $h^2(Y,V)$ in the case where $w$ is a proper map. Let $\Sigma$ be the set of critical values of $w$ and let $\rho_s$ denote the number of irreducible components in $w^{-1}(s)$. Define 
\[
k(Y,w) := \sum_{s\in \Sigma}(\rho_s-1).
\]
In Theorem \ref{prop:folk}, we will show that if $h^{1}(V) = 0$ and $w$ is proper, then $h^2(Y,V) = k(Y,w)$.

Section \ref{ex:three} gives a complete computation of the KKP Hodge numbers of LG models in dimension 3 which satisfy certain natural conditions. In examples (see \cite{prz}), the mirror to a Fano threefold is a LG model $(Y,w)$ which has several restrictive properties. First, $Y$ admits a smooth projective compactification $Z$ so that $w :Y \rightarrow \mathbb{C}$ extends to a projective map $f: Z \rightarrow \mathbb{P}^1$. The following conditions also hold.
\begin{enumerate}
\item A general smooth fiber $V$ of $f$ is a K3 surface which is anticanonical in $Z$.
\item The fiber of $f$ over $\infty$ is a simple normal crossings union of rational surfaces whose dual intersection complex is a triangulation of a sphere.
\item  $h^{i,0}(Z) = 0$ for $i \neq 0$. 
\end{enumerate}
The data of $(Z,f)$ satisfying these conditions will be called a {\em type III compactification of $(Y,w)$}. Let $ph(Y,w)$ be the dimension of the cokernel of $H^2(Y;\mathbb{Q}) \rightarrow H^2(V;\mathbb{Q})$.  The main theorem of Section \ref{ex:three} says that the KKP Hodge numbers of a LG model with a type III compactification are expressed in terms of $k(Y,w)$, $ph(Y,w)$ and $h^{2,1}(Z)$.
\begin{theorem}[Theorem \ref{thm:3fold}]\label{thm:3foldintro}
If $(Y,w)$ admits a type III compactification $(Z,f)$, then
\begin{align*}
f^{3,0}(Y,w) & = f^{0,3}(Y,w) = 1,\\ 
f^{1,1}(Y,w) &= f^{2,2}(Y,2) = k(Y,w),\\  
f^{2,1}(Y,w) &= f^{1,2}(Y,w) = ph(Y,w) -2 + h^{2,1}(Z),
\end{align*}
and $f^{p,q}(Y,w)=0$ for all other values of $p,q$.
\end{theorem}
We will conclude in Section \ref{sect:toric} by showing that, in dimension 3, topological mirror symmetry holds for crepant resolutions of Gorenstein toric Fano varieties. For each weak Fano toric threefold $\hat{X}_\Delta$, we will construct an LG model $(Y_\Delta, w_\Delta)$ and a type III compactification of $(Y_\Delta,w_\Delta)$. We conjecture that these LG models are mirror to $\hat{X}_\Delta$. We will then use Theorem \ref{thm:3fold} to compute the KKP Hodge numbers of $(Y_\Delta, w_\Delta)$ explicitly and show in Theorem \ref{thm:weakfanohn} that 
\[
h^{p,q}(\hat{X}_\Delta) = f^{3-p,q}(Y_\Delta, w_\Delta) 
\]
for all $p,q$. Hence $(Y_\Delta,w_\Delta)$ and $\hat{X}_\Delta$ are topologically mirror to one another. The proof of this theorem relies on basic combinatorial dualities relating counts of integral points in reflexive 3-dimensional polytopes and their polar dual polytopes.
\subsection*{Conventions}
Throughout this paper, we will often refer to cohomology groups without specifying coefficients (e.g. $H^i(X)$). In these cases, the reader may interpret $H^i(X)$ to mean either cohomology with complex or rational coefficients. When discussing cohomology groups admitting Hodge structures, we will usually write cohomology with rational coefficients unless otherwise specified.

All algebraic varieties in this paper will be assumed to be over $\mathbb{C}$, and we consider them with respect to their analytic topology.

\subsection*{Acknowledgements}
I would like to thank Valery Lunts for many valuable suggestions and comments. I would also like to thank Charles Doran, Ludmil Katzarkov, and Victor Przyjalkowski for useful conversations. I was partially supported by an NSERC postgraduate scholarship and the Simons Collaboration in Homological Mirror Symmetry during the preparation of this paper.

\section{Background}\label{sect:over}

Here we will review basic facts about mixed Hodge structures which we will find useful, along with a brief discussion of mirror symmetry for Fano manifolds and LG models.

\subsection{Mixed Hodge structures}
This section is devoted to reviewing some basic facts about mixed Hodge structures. The book of Peters and Steenbrink \cite{pet-st} contains proofs of most of the facts mentioned here. We will assume that the reader has a basic understanding of pure Hodge structures.

\begin{defn}
Let $R$ be a finite dimensional vector space over $\mathbb{Q}$. Assume that we have two filtrations given as follows.
\begin{enumerate}[--]
\item An ascending filtration $W_\bullet$ on $R$ called the {\em weight filtration}. 
\item A descending filtration $F^\bullet$ on $R_\mathbb{C}:=R \otimes \mathbb{C}$ called the {\em Hodge filtration}. 
\end{enumerate}
Let $W^\mathbb{C}_\bullet$ be the $\mathbb{C}$-linear extension of $W_\bullet$ to $R_\mathbb{C}$. Let
$$
\mathrm{Gr}^W_iR = W_i/W_{i-1}, \qquad \mathrm{Gr}_F^jR = F^j/F^{j+1}.
$$
The data $(R,F,W)$ is a {\em mixed Hodge structure} if on the $i^\mathrm{th}$ graded component of the weight filtration, $\mathrm{Gr}^W_iV$, the induced Hodge filtration
$$
F^j \mathrm{Gr}^{W^\mathbb{C}}_iR_\mathbb{C} = \mathrm{Im}( F^j \cap W^\mathbb{C}_i \rightarrow \mathrm{Gr}^{W^\mathbb{C}}_iR_\mathbb{C})
$$
defines a pure Hodge structure of weight $i$.
\end{defn}
\begin{defn}
Let $(R,F,W)$ be a mixed Hodge structure. We define $i^{p,q}(R)$ to be the dimension of $\mathrm{Gr}_F^p \mathrm{Gr}^{W^\mathbb{C}}_{p+q}R_\mathbb{C}$.
\end{defn}
From this definition it follows that $\dim \mathrm{Gr}_F^pR_\mathbb{C} = \sum_{q} i^{p,q}(R)$. According to \cite[Corollary 3.8]{pet-st}, if there is an exact sequence of mixed Hodge structures
\begin{equation}\label{eshodge}
R' \longrightarrow R \longrightarrow R''
\end{equation}
then for each $p,q$ we get an exact sequence
$$
\mathrm{Gr}_F^p\mathrm{Gr}^{W^\mathbb{C}}_{p+q} R'_\mathbb{C}  \longrightarrow \mathrm{Gr}_F^p\mathrm{Gr}^{W^\mathbb{C}}_{p+q} R_\mathbb{C} \longrightarrow \mathrm{Gr}_F^p\mathrm{Gr}^{W^\mathbb{C}}_{p+q} R_\mathbb{C}''.
$$
Moreover, if (\ref{eshodge}) is a short exact sequence of mixed Hodge structures, then Hodge numbers are additive. In other words,
$$
i^{p,q}(R') + i^{p,q}(R'') = i^{p,q}(R).
$$
If $(R,F,W)$ is a mixed Hodge structure, then $R^\vee = \Hom_\mathbb{Q}(R,\mathbb{Q})$ is naturally equipped with mixed Hodge structure with $i^{p,q}(R) = i^{-p,-q}(R^\vee)$. 

The {\em Tate Hodge structure}, denoted $\mathbb{Q}(-1)$, is the unique pure Hodge structure of weight $2$ so that $i^{1,1}(\mathbb{Q}(-1)) = 1$ and so that $i^{p,q}(\mathbb{Q}(-1)) = 0$ if $(p,q) \neq (1,1)$. If $R$ is a mixed Hodge structure, then $R(-d)$ denotes $R \otimes (\mathbb{Q}(-1))^{\otimes d}$.

Next, let us discuss variations of Hodge structure and limit mixed Hodge structures.
\begin{defn}
Let $U$ be a complex manifold. A {\em variation of Hodge structure over $U$ of weight $n$} is the following data.
\begin{enumerate}[--]
\item A vector bundle $\mathscr{R}$ on $U$.
\item A flat connection $\nabla: \mathscr{R} \rightarrow \Omega^1_U \otimes_{\mathscr{O}_U} \mathscr{R}$.
\item A rational local system $\mathsf{R}$ so that $\mathsf{R} \otimes \mathbb{C} \cong \ker \nabla$.
\item A finite descending filtration $\mathscr{F}^n \subseteq \mathscr{F}^{n-1} \subseteq \dots \subseteq \mathscr{F}^0 = \mathscr{R}$ by sub-bundles.
\end{enumerate}
We require that at each point $t \in U$, the filtration $\mathscr{F}^n_t \subseteq \dots \subseteq \mathscr{F}^0_t$ defines a pure Hodge structure of weight $n$ on $\mathsf{R}_t$, and that $\nabla \mathscr{F}^j \subseteq \mathscr{F}^{j-1}$ for each $j$.  
\end{defn}
Suppose we have a variation of Hodge structure $(\mathscr{R}, \mathscr{F}^\bullet,\nabla)$ of weight $n$ over the punctured disc $\mathbf{D}^* = \mathbf{D} \setminus \{0\}$. If $t \in \mathbf{D}^*$ and $\gamma$ is a generator of $\pi_1(\mathbf{D}^*, t)$, associated to a counterclockwise loop, then $\gamma$ acts on $\mathsf{R}_t$ by parallel transport. We call this the {\em monodromy action on $\mathsf{R}_t$} and we will denote it $T$.

According to work of Schmid \cite{sch}, one may associate a mixed Hodge structure to the data of a variation of Hodge structure of weight $n$ over $\mathbf{D}^*$ whose underlying vector space is $\mathsf{R}_t$. We use the notation $(R_\mathrm{lim},F_\mathrm{lim},W_\mathrm{lim})$ to denote this mixed Hodge structure.  The Hodge filtration, whose precise definition we will not review here, is determined by the asymptotic behaviour of the Hodge filtration on $\mathbf{D}^*$ and has the useful property that $\dim \mathrm{Gr}_{F_\mathrm{lim}}^j R_\mathrm{lim} = \dim \mathrm{Gr}_{\mathscr{F}_t}^j \mathscr{R}_t$ for all $j$. The weight filtration is determined by $N := \log T$, and is given by the formula
\[
W_{\ell+n} = \sum_{k-j=\ell}\ker N^{k+1} \cap \mathrm{im}\, N^j.
\]
The operator $N$ acts on $R_\mathrm{lim}$ in a very nice way. Particularly, we have that $N$ induces a morphism of mixed Hodge structures from $R_\mathrm{lim}$ to $R_\mathrm{lim}(-1)$ and hence a morphism pure Hodge structures from $\mathrm{Gr}^W_iR_\mathrm{lim}$ to $\mathrm{Gr}^W_{i-2}R_\mathrm{lim}(-1)$. Moreover, $N^k$ induces an isomorphism of pure Hodge structures between $\mathrm{Gr}^W_{n+k}R_\mathrm{lim}$ and $\mathrm{Gr}^W_{n-k}R_\mathrm{lim}(-k)$. Therefore if $i > n$, $N: \mathrm{Gr}^W_iR_\mathrm{lim} \rightarrow \mathrm{Gr}^W_{i-2}R_\mathrm{lim}(-1)$ is injective and if $i \leq n$, the same map is surjective.


\subsection{Mixed Hodge structure on the cohomology of varieties}
We will now discuss some ways in which mixed Hodge structures appear in algebraic geometry. Deligne \cite{del-th1,del-th2} has proven that if $U$ is an algebraic variety over $\mathbb{C}$ then $H^i(U;\mathbb{Q})$ admits a canonical mixed Hodge structure. If $U$ is smooth and projective, then $H^d(U;\mathbb{Q})$ carries a pure Hodge structure of weight $d$. There are two other cases that we will find useful to discuss in this paper: the case where $U$ is smooth and quasiprojective but not necessarily projective, and the case where $U$ is projective and has simple normal crossings.

A projective variety $U$ of dimension $n$ has {\em simple normal crossings} if for every $p \in U$, there is an open analytic neighbourhood $U_p$ of $p$ which is analytically isomorphic to a neighbourhood of 0 in the set $\{ x_1x_2\dots x_k = 0\} \subset \mathbb{C}^{n+1}$ for some $k$. We can compute the cohomology of $U$, and even the weight graded pieces of its mixed Hodge structure using the following result.
\begin{theorem}[{\cite[pp. 103]{mor}, \cite[Section 4]{cs}}]\label{thm:mv}
Let $U$ be a projective $d$-dimensional normal crossings variety with irreducible components $U_1,\dots, U_k$. Let
$$
U^{[n]} = \bigsqcup_{\substack{I \subset [1,k]\\ |I| =n+1 }} \cap_{i\in I} U_i
$$
be the disjoint union of all codimension $n$ strata of $U$. There is a spectral sequence with $E_1$ term
$$
E_1^{p,q} = H^{q}(U^{[p]};\mathbb{Q})
$$
which degenerates to $H^{p+q}(U;\mathbb{Q})$ at the $E_2$ term. There is an isomorphism of mixed Hodge structures between $E_2^{p,q}$ and $\mathrm{Gr}_W^qH^{p+q}(U;\mathbb{Q})$.
\end{theorem}
\begin{corollary}\label{cor:ncomp}
If $U$ is a normal crossings variety of dimension $d$, and $n: \widetilde{U} \rightarrow U$ is its normalization, then 
\[
n^* : H^{2d}(U;\mathbb{Q}) \rightarrow H^{2d}(\widetilde{U};\mathbb{Q})
\]
is an isomorphism. Hence $\dim H^{2d}(U;\mathbb{Q})$ is equal to the number of irreducible components of $U$.
\end{corollary}

If $U$ is a simple normal crossings variety of dimension $d$, then there is an associated simplicial complex called the {\em dual intersection complex} of $U$, denoted $\Gamma_U$. The underlying topological space of $\Gamma_U$ is called $|\Gamma_U|$. If $U_I$ denotes $\cap_{i \in I}$ for  $I \subset [1,k]$, then each $U_I$ corresponds to a $d- |I|$ simplex $\sigma_I$ of $\Gamma_U$ and $\sigma_I$ is included in $\sigma_{I'}$ if and only if $U_{I'} \subset U_I$. Another consequence of  Theorem \ref{thm:mv} is the following statement.
\begin{corollary}[{\cite[pp. 105]{mor}}]\label{cor:mor}
If $X$ is a simple normal crossings variety, then
\[
H^i(|\Gamma_U|;\mathbb{Q}) \cong W_0 H^i(U;\mathbb{Q}).
\]
\end{corollary}
We will now describe an approach to computing the mixed Hodge structure on a quasiprojective variety that will be used several times in this paper.

Recall that for any closed subspace $N$ of a manifold $M$, we have a long exact sequence of compactly supported cohomology groups.
\begin{equation}\label{eq:cpsup}
\dots \longrightarrow H^i_c (M\setminus N ;\mathbb{Q}) \longrightarrow H^i_c(M;\mathbb{Q}) \longrightarrow H^i_c(N;\mathbb{Q}) \longrightarrow \dots.
\end{equation}
We also recall that if $M$ is an oriented manifold without boundary, then Poincar\'e duality gives an isomorphism between $H^i_c(M;\mathbb{Q})$ and $H^{\dim_\mathbb{R} M-i}(M;\mathbb{Q})^\vee$. If $M$ and $N$ are smooth algebraic varieties, and $N \subset M$ is an algebraic subvariety, then (\ref{eq:cpsup}) becomes a long exact sequence of mixed Hodge structures. Furthermore, if $M$ is a smooth variety of (complex) dimension $d$, then $H^i_c(M;\mathbb{Q})$ is isomorphic to $H^{2d-i}(M;\mathbb{Q})^\vee(-d)$ as a mixed Hodge structure (see \cite[Theorem 1.7.1]{fujiki} for details). 

Therefore, if $U$ is smooth and quasiprojective and we have a smooth projective compactification $\overline{U}$ of $U$ so that $\overline{U} \setminus U \subset \overline{U}$ is a simple normal crossings divisor, these facts allow us to  compute the mixed Hodge structure on $H^i(U;\mathbb{Q})$. First we compute the mixed Hodge structure on $\overline{U} \setminus U$ using Theorem \ref{thm:mv}, then we apply (\ref{eq:cpsup}).


\subsection{Limit mixed Hodge structures and degenerations}

We now assume we are in the following geometric situation. Let $\pi : \mathscr{U} \rightarrow \mathbf{D}$ be a projective map and assume that $\mathscr{U}$ is a holomorphic manifold. We assume that the fibers over $p \in \mathbf{D}^*$ are smooth. If $\pi^0$ is the restriction of $\pi$ to the preimage of $\mathbf{D}^*$, the local system $R^i \pi^0_*\mathbb{Q}$ carries a variation of Hodge structure, hence there is an associated limit mixed Hodge structure at $0$ whose underlying vector space is isomorphic to $(R\pi_*^0\mathbb{Q})_t \cong H^i(\mathscr{U}_t;\mathbb{Q})$.

\begin{theorem}[{\cite[Proposition C.11]{pet-st}}\label{thm:clcon}]\label{thm:clcon}
Under the assumptions above, there is a strong deformation retract from $\mathscr{U}$ to $\pi^{-1}(0)$.
\end{theorem}
We will now introduce the {\em Clemens--Schmid exact sequence}, which relates the cohomology of $\mathscr{U}$, the cohomology of a smooth fiber of $\pi$, and the monodromy action on $H^i(\mathscr{U}_t,\mathbb{Q})$. In the statement of Theorem \ref{thm:csex}, we use Theorem \ref{thm:clcon} to equip $H^i(\mathscr{U};\mathbb{Q})$ with a mixed Hodge structure.

\begin{theorem} [{\cite[Corollary 11.44]{pet-st}, \cite{clem}}]\label{thm:csex}
Let $\mathscr{U}$ be a K\"ahler manifold and assume that $\pi : \mathscr{U} \rightarrow \mathbf{D}$ is a projective map which is smooth away from $0$.  Let $V$ be a smooth fiber of $\pi$ and let $T_i$ be the action on $H^i(V;\mathbb{Q})$ of monodromy around a counterclockwise loop in $\mathbf{D}^*$. Let $N_i = \log\, T_i$ and let $\mathscr{U}^* = \mathscr{U} \setminus \pi^{-1}(0)$. Then there is a long exact sequence of mixed Hodge structures
\[
\dots \longrightarrow H^i(\mathscr{U},\mathscr{U}^*;\mathbb{Q}) \xrightarrow{\,q_i\,} H^i(\mathscr{U};\mathbb{Q}) \xrightarrow{\,r_i\,} H^i_\mathrm{lim}(V;\mathbb{Q}) \xrightarrow{\,N_i\,} H^i_\mathrm{lim}(V;\mathbb{Q}) \longrightarrow \dots.
\]
On the level of vector spaces, $r_i$ can be identified with the pullback associated to the embedding $r : V \hookrightarrow \mathscr{U}$ and $H^i_\mathrm{lim}(V;\mathbb{Q})$ denotes the limit mixed Hodge structure on $H^i(V;\mathbb{Q})$ at $0$. 
\end{theorem}
Finally, before moving on, we record the global invariant cycles theorem. If $f:U\rightarrow C$ is a proper, dominant morphism of smooth quasiprojective varieties, then there is a Zariski open subset $C^\circ$ of $C$ so that $f$ is smooth on $f^{-1}(C^\circ)$. For every $s\in C^\circ$, there is a monodromy representation
\[
\gamma : \pi_1(C^\circ,s)  \longrightarrow \mathrm{GL}(H^i(U_s;\mathbb{Q}))
\]
where $U_s$ denotes the fiber of $f$ over $s$. 
\begin{theorem}[{\cite[Theorem 4.24, Corollary 4.25]{vois3}, \cite[Th\'eor\`eme 4.1.1]{del-th1}}]\label{thm:gict}
Let $f:U \rightarrow C$ be a proper, dominant morphism of smooth quasiprojective varieties. The restriction map $H^i(U;\mathbb{Q})\rightarrow H^i(U_s;\mathbb{Q})$ is a morphism of mixed Hodge structures whose image is $H^i(U_s;\mathbb{Q})^\gamma$.
\end{theorem}

\subsection{Mirror symmetry and LG models}

The following is a somewhat loose explanation of how the statement that a Fano manifold $X$ is mirror to a LG model $(Y,w)$ should be interpreted. A more precise and thorough discussion may be found in \cite[Section 2]{kkp2}.

A triple $(X,\omega_X,s)$ composed of 
\begin{enumerate}[--]
\item a Fano manifold $X$,
\item a complexified K\"ahler form $\omega_X$ on $X$,
\item a section $s \in H^0(X; K_X^{-1})$ whose vanishing locus has normal crossings,
\end{enumerate}
and a quadruple $((Y,w),\omega_Y, \mathrm{vol}_Y)$ composed of
\begin{enumerate}[--]
\item a quasiprojective manifold $Y$,
\item a surjective regular function $w: Y \rightarrow \mathbb{C}$ with compact critical locus,
\item a complexified K\"ahler form $\omega_Y$,
\item a trivialization $\mathrm{vol}_Y$ of $K_Y^{-1}$,
\end{enumerate}
form a {\em homological mirror pair} if the category of A-branes associated to $(X,\omega_X,s)$ (the derived Fukaya category of $(X,\omega_X)$) is equivalent to the category of B-branes associated to $((Y,w),\omega_Y,\mathrm{vol}_Y)$ (the category of matrix factorizations of $(Y,w)$), and the category of B-branes associated to $(X,\omega_X,s)$ (the derived category of coherent sheaves on $X$) is equivalent to the category of A-branes on $((Y,w),\omega_Y,\mathrm{vol}_Y)$ (the derived Fukaya-Seidel category of $(Y,w)$).

According to \cite[Remark 2.1]{kkp2}, if the vanishing locus of $s$ is smooth, then we expect that $w$ is a proper morphism. Our main focus in this paper is mirror symmetry for Fano threefolds. For all Fano threefolds, there exists a smooth anticanonical hypersurface, so we will restrict ourselves to LG models where $w$ is proper.
\begin{defn}\label{defn:mirror}
Let $(Y,w)$ be a pair composed of a smooth quasiprojective variety $Y$ and a regular map $w : Y \rightarrow \mathbb{C}$. {\em A tame compactification} of $(Y,w)$ is a pair $(Z,f)$ consisting of a smooth, projective compactification $Z$ of $Y$ along with a projective morphism $f : Z \rightarrow \mathbb{P}^1$ so that 
\begin{enumerate}[--]
\item $w = f|_Y$, 
\item $Z \setminus Y$ is simple normal crossings, 
\item $f$ has a pole of multiplicity 1 along each component of $f^{-1}(\infty)$, 
\item the critical locus of $f|_{f^{-1}(\mathbb{C})}$ is contained in $Y$.
\end{enumerate}
\end{defn}
\begin{defn}\label{def:LG}
A {\em proper LG model} is a pair $(Y,w)$ with $Y$ a smooth quasiprojective variety and $w: Y\rightarrow \mathbb{C}$ a regular map so that
\begin{enumerate}[--]
\item $w$ is a proper map,
\item $(Y,w)$ admits a tame compactification.
\end{enumerate}
\end{defn}

\begin{remark}
Many of the results that we will prove do not rely on the existence of the trivialization $\mathrm{vol}_Y$ or a choice of $\omega_Y$, so we have omitted them from our definitions for the sake of simplicity.
\end{remark}

Let $(Y,w)$ be a proper LG model and let $(Z,f)$ be a tame compactification of $(Y,w)$. Let $D_\infty := f^{-1}(\infty)$, and let $\Omega_Z^\bullet(\log D_\infty)$ be the usual complex of holomorphic differential forms on $Z$ with logarithmic poles at $D_\infty$. 
\begin{defn}
Let $(Y,w)$ be a proper LG model and let $(Z,f)$ be a tame compactification of $(Y,w)$.
The sheaf  of {\em $f$-adapted holomorphic $i$-forms}, which we will denote $\Omega^i_Z(\log D_\infty, f)$, is the subsheaf made up of logarithmic $i$-forms $\omega \in \Omega^i_Z(\log D_\infty)$ so that $\mathrm{d}f \wedge \omega$ has log poles along $D_\infty$. The natural differential coming from the inclusion of $\Omega_Z^i(\log D_\infty,f)$ into $\Omega_Z^i(\log D_\infty)$ defines a differential on $\Omega^\bullet_Z(\log D_\infty, f)$ which turns it into a subcomplex of $\Omega_Z^\bullet(\log D_\infty)$.  
\end{defn}

Katzarkov, Kontsevich, and Pantev prove (\cite[Lemma 2.19]{kkp2}) that the hypercohomology spectral sequence for the stupid filtration on $\Omega_Z^\bullet(\log D_\infty,f)$ degenerates at the $E_1$ term.  Therefore,
$$
\dim \HH^i(Y,w)  = \sum_{p+q=i}\dim \HH^p(Z,\Omega_Z^q(\log D_\infty, f)).
$$
\begin{defn}\label{def:kkphn}
The {\em KKP Hodge numbers of $(Y,w)$} are the invariants 
\[
f^{p,q}(Y,w) := \dim \HH^q(Z,\Omega_Z^p(\log D_\infty, f)).
\]
\end{defn}
If $X$ and $(Y,w)$ form a homological mirror pair, then $f^{p,q}(Y,w)$ are expected to reflect the Hodge numbers of $X$.
\begin{conjecture}[{\cite[Conjecture 3.7]{kkp2}}]\label{conj:KKP}
If $(X,\omega_X,s)$ and $((Y,w),\omega_Y, \mathrm{vol}_Y)$ form a mirror pair and $\dim X = \dim Y = d$, then
\begin{equation}\label{mstop}
h^{p,q}(X) = f^{d-p,q}(Y,w)
\end{equation}
for all $p,q$.
\end{conjecture}
As mentioned in the introduction, if $X$ is a Fano manifold and $(Y,w)$ is a LG model so that $\dim X = \dim Y = d$, and (\ref{mstop}) is satisfied, then we say that $X$ and $(Y,w)$ form a {\em topologically mirror pair}.

\section{KKP Hodge numbers of proper LG models}\label{sect:hdg}

This section is devoted to proving several general results about the KKP Hodge numbers of proper LG models. These are results that will hold in arbitrary dimension. In subsequent sections, we will apply the results in this section to prove results in the special case where $\dim Y = 3$.

\subsection{Relation between filtrations}\label{sect:relfilt}

The goal of this subsection will be to show how one may use the mixed Hodge structure on certain relative cohomology groups to compute $f^{p,q}(Y,w)$ when $(Y,w)$ is a proper LG model. We will let $V = w^{-1}(t)$ for $t$ any regular value of $w$. Then \cite[Lemma 2.21]{kkp2} says that
$$
\dim \HH^i(Y,V) = \dim \HH^i(Y,w).
$$
There is a natural mixed Hodge structure on $H^i(Y,V;\mathbb{Q})$ and Hodge filtration of this mixed Hodge structure can be defined in the following way (see e.g. \cite[pp. 222]{vois3}). Let $\Omega^i_Z(\log D_\infty, \mathrm{rel} \, V)$ be the kernel of the natural restriction map
$$
\iota^*: \Omega_Z^i(\log D_\infty) \longrightarrow \iota_* \Omega^i_V.
$$
where $\iota:V \hookrightarrow Z$ is the natural embedding. There is an isomorphism between $\mathbb{H}^i(Z,\Omega_Z^\bullet(\log D_\infty, \mathrm{rel}\, V))$ and $H^i(Y,V;\mathbb{C})$. The Hodge filtration on $H^i(Y,V;\mathbb{C})$ is defined in the standard way. First define the Hodge filtration on the complex $\Omega^\bullet_Z(\log D_\infty,\mathrm{rel}\, V)$ to be given by the subcomplexes
$$
F^p \Omega^\bullet_{Z}(\log D_\infty,\mathrm{rel}\, V) := \dots \rightarrow 0 \rightarrow \Omega^p_{Z}(\log D_\infty,\mathrm{rel}\,V ) \rightarrow \Omega^{p+1}_{Z}(\log D_\infty,\mathrm{rel}\, V) \rightarrow \dots.
$$
Then we define $F^p\mathbb{H}^i(Z,\Omega_Z^\bullet(\log D_\infty, \mathrm{rel}\, V))$ to be the image of the natural map in hypercohomology
$$
\mathbb{H}^i(Z,F^p\Omega^\bullet_{Z}(\log D_\infty,\mathrm{rel}\, V)) \rightarrow \mathbb{H}^i(Z,\Omega^\bullet_{Z}(\log D_\infty,\mathrm{rel}\, V)).
$$
The spectral sequence associated to this filtration degenerates at the $E_1$ term. Therefore, the dimension of the $p^\mathrm{th}$ Hodge graded piece of $\HH^{p+q}(Y,V;\mathbb{C})$ is 
\[
h^{p,q}(Y,V) := \dim H^q(Z,\Omega_Z^p(\log D_\infty,\mathrm{rel}\, V)).
\]
Combining this with the discussion in the previous section, it follows that
\[
\sum_{p+q = i} h^{p,q}(Y,V) = \dim H^{i}(Y,V) = \sum_{p+q=i}f^{p,q}(Y,w).
\]
We will now show that this equality may be refined to an equality between $h^{p,q}(Y,V)$ and $f^{p,q}(Y,w)$. This will allow us to apply standard techniques in Hodge theory to compute the KKP Hodge numbers of an LG model.
\begin{theorem}\label{thm:filt}
Let $(Y,w)$ be a proper LG model and let $V$ be a smooth fiber of $w$. Then
$$
h^{p,q}(Y,V) = f^{p,q}(Y,w)
$$
for all $p$ and $q$.
\end{theorem}

\begin{proof}
In the proof of \cite[Claim 2.22]{kkp2}, the Katzarkov, Kontsevich, and Pantev construct an object which they call $\mathsf{E}_{\mathscr{Z}/\mathbf{D}}^\bullet$. Let $\mathbf{D}$ be a small disc in $\mathbb{P}^1$ with center at $\infty$ and parameter $\epsilon$. We then let $\mathscr{Z} = Z \times \mathbf{D}$, and we let $p$ be the projection of $\mathscr{Z}$ onto $\mathbf{D}$. Let $\mathscr{D}_\infty$ be the divisor $D_\infty \times \mathbf{D}$ in $\mathscr{Z}$. We then have that $f \times \mathrm{id}$ gives a map from $\mathscr{Z}$ to $\mathbb{P}^1 \times \mathbf{D}$. Let $\Gamma$ be the preimage of the diagonal of $\mathbf{D} \times \mathbf{D} \subset \mathbb{P}^1 \times \mathbf{D}$ under the map under $f \times \mathrm{id}$. Briefly, $\Gamma$ is the divisor in $\mathscr{Z}$ so that under the projection to $Z$, the fiber over $p \in \mathbf{D}$ goes to $f^{-1}(p)$. As usual one lets $\Omega_{\mathscr{Z}/\mathbf{D}}^1(\log \mathscr{D}_\infty)$ be the quotient of $\Omega^1_\mathscr{Z}(\log \mathscr{D}_\infty)$ by $p^{-1}\Omega_\mathbf{D}^1$. One then defines $\Omega_{\mathscr{Z}/\mathbf{D}}^a(\log \mathscr{D}_\infty)$ to be $\bigwedge^a\Omega^1_{\mathscr{Z}/\mathbf{D}}(\log \mathscr{D}_\infty)$. We have that the restriction of this sheaf to $p^{-1}(\epsilon)$ is simply $\Omega_Z(\log D_\infty)$ for any $\epsilon$. Similarly, we have the complex of sheaves $\Omega^a_{\Gamma/\mathbf{D}}(\log D_\Gamma)$ where $D_\Gamma = \mathscr{D}_\infty \cap \Gamma$, however, one must replace $p^{-1}\Omega_\mathbf{D}^1$ with $p^{-1}\Omega_\mathbf{D}^1(\log \infty)$ in the definition given above, since the fiber over $\infty$ is allowed to be singular. Note that $\Gamma$ is simply $f^{-1}(\mathbf{D})$ and $D_\Gamma$ is $f^{-1}(\infty)$. The natural differential then induces a differential on these two complexes, and if we let $i_\Gamma: \Gamma \rightarrow \mathscr{Z}$ be the natural embedding, then we may define
$$
\Omega^\bullet_{\mathscr{Z}/\Delta}(\log \mathscr{D}_\infty,\mathrm{rel}\,f) = \mathrm{ker}(\Omega_{\mathscr{Z}/\Delta}^\bullet(\log \mathscr{D}_\infty) \longrightarrow i_{\Gamma*}\Omega_{\Gamma/\mathbf{D}}^\bullet(\log D_\Gamma)).
$$
The complex that is called $\mathsf{E}^\bullet_{\mathscr{Z}/\mathbf{D}}$ in \cite{kkp2} is the graded sheaf $\Omega^\bullet_{\mathscr{Z}/\mathbf{D}}(\log \mathscr{D}_\infty,\mathrm{rel}\,f)$ equipped with the natural differential. A local computation in the proof of \cite[Claim 2.22]{kkp2} shows that the restriction of $\mathsf{E}^\bullet_{\mathscr{Z}/\mathbf{D}}$ to $Z \times \epsilon$ for $\epsilon \neq \infty$ is equal to $\Omega_Z^\bullet(\log D_\infty, \mathrm{rel}\, f^{-1}(\epsilon))$, and the restriction to $Z \times \infty$ gives the complex $\Omega_{Z}^\bullet(\log D_\infty, f)$. The complex $\mathsf{E}^\bullet_{\mathscr{Z}/\mathbf{D}}$ is a complex of analytic coherent sheaves on $\mathscr{Z}$.

The hyper-derived direct image $\mathbb{R}^ap_*\mathsf{E}_{\mathscr{Z}/\mathbf{D}}^\bullet$ has fibers which are just the hyper cohomology groups of the complexes $\Omega^\bullet_Z(\log D_\infty,\mathrm{rel}\, f^{-1}(\epsilon))$ if $\epsilon \neq \infty$ and $\Omega^\bullet_Z(\log D_\infty, f)$ if $\epsilon =\infty$. According to \cite[Lemma 2.21]{kkp2} or \cite[Appendix C]{esy} it is then true that the fibers of $\mathbb{R}^ap_*\mathsf{E}^\bullet_{\mathscr{Z}/\mathbf{D}}$ have constant dimension over $\mathbf{D}$ for all $a$. 

Now the $i^\mathrm{th}$ hypercohomology group of $\Omega_Z^\bullet(\log D_\infty, \mathrm{rel} \, f^{-1}(\epsilon))$ is isomorphic to the cohomology group $\HH^i(Y,f^{-1}(\epsilon);\mathbb{C})$, and the spectral sequence associated to the stupid filtration on it degenerates at the $E_1$ term. Thus we have that
$$
\dim H^{i}(Y,f^{-1}(\epsilon)) = \sum_{p+q=i}h^p(Z,\Omega^q(\log D_\infty,\mathrm{rel} \, f^{-1}(\epsilon))).
$$
Similarly by the degeneration of the Hodge-to-de Rham spectral sequence for $f$-adapted forms, (\cite[Lemma 2.19]{kkp2} or \cite[Theorem 1.3.2]{esy}), the same is true of $\mathbb{H}^i(Z,\Omega_Z^\bullet(\log D_\infty,f))$. In other words, 
$$
h^{i}(Y,w) = \sum_{p+q=i}h^p(Z,\Omega^q_Z(\log D_\infty, f)).
$$
The rest of our argument is standard. By Grauert's semicontinuity theorem, (see e.g. \cite[Theorem 8.5(ii)]{bpv}), the value of
$$
\epsilon \mapsto \dim \HH^p(p^{-1}(\epsilon), (\mathsf{E}^q_{\mathscr{Z}/\mathbf{D}})|_{p^{-1}(\epsilon)})
$$
is upper semicontinuous on $\mathbf{D}$ in the analytic Zariski topology. Thus it follows that for a general enough point $\epsilon_0$ of $\mathbf{D}$,
$$
h^{p,q}(Y,f^{-1}(\epsilon_0)) \leq f^{p,q}(Y,w).
$$
However, the fact that
\begin{align*}
\sum_{p+q=i}h^p(Z,\Omega^q(\log D_\infty,\mathrm{rel} \, f^{-1}(\epsilon_0))) &= \dim H^{i}(Y,f^{-1}(\epsilon_0)) \\ &= \dim \mathbb{H}^{i}(Z,\Omega^\bullet_Z(\log D_\infty, f)) \\ &= \sum_{p+q=i}\dim H^p(Z,\Omega^q(\log D_\infty, f)).
\end{align*}
implies that we must have equality between $h^{p,q}(Y,f^{-1}(\epsilon))$ and $f^{p,q}(Y,w)$ at all points.
\end{proof}
\begin{remark}\label{remk:additive}
The cohomology groups $H^i(Y,V;\mathbb{Q})$ admit a mixed Hodge structure whose Hodge filtration is given as above. Therefore
\begin{equation}\label{disambig}
h^{p,q}(Y,V) = \sum_{k} i^{p,k}(H^{p+q}(Y,V)).
\end{equation}
Combining Theorem \ref{thm:filt} with (\ref{disambig}) we find that 
$$
f^{p,q}(Y,w) = \sum_{k} i^{p,k}(H^{p+q}(Y,V)).
$$
This will be a very useful fact in Section \ref{ex:three}.
\end{remark}
\subsection{Poincar\'e duality}

We will now check that a version of Poincar\'e duality holds for $\HH^i(Y,w)$. Precisely, we will show that if $(Y,w)$ is a proper LG model of dimension $d$, then $h^{2d-i}(Y,w) = h^i(Y,w)$ for all $i$. First we recall the relative Mayer--Vietoris exact sequence.

\begin{proposition}
Let $Y_1$ and $Y_2$ be manifolds and let $S_1$ and $S_2$ be submanifolds of $Y_1$ and $Y_2$ respectively so that $Y = Y_1 \cup Y_2$ and let $S = S_1 \cup S_2 \subseteq Y$. Then there is a long exact sequence in cohomology,
$$
\dots \longrightarrow \HH^i(Y,S) \longrightarrow \HH^i(Y_1,S_1) \oplus \HH^i(Y_2,S_2) \longrightarrow \HH^i(Y_1 \cap Y_2, S_1 \cap S_2) \longrightarrow \dots.
$$
\end{proposition}
Let $\Sigma \subset \mathbb{C}$ be the set of critical values of $w$ and let $p$ be a base point in $\mathbb{C} \setminus \Sigma$. We may choose a collection $\{U_s\}_{s\in \Sigma}$ of open subsets of $\mathbb{C}$ which are homeomorphic to open discs so that 
\begin{enumerate}[--]
\item[--] each $U_s$ contains $p$ and $s \in \Sigma$ but no other critical values of $w$, 
\item[--] for any subset $S \subset \Sigma$, the set $\bigcap_{s\in S} U_s$ is simply connected, 
\item[--] $\bigcup_{s\in \Sigma} U_s$ is a deformation retract of $\mathbb{C}$. 
\end{enumerate}
Then let $Y_s = w^{-1}(U_s)$ for each $s \in \Sigma$. Let $V = w^{-1}(p)$. The following proposition is likely well known.
\begin{proposition}\label{prop:locglob}
Suppose that $Y,Y_s$ and $V$ are as above. Then
$$
h^i(Y,V) = \sum_{s\in \Sigma} h^i(Y_s,V).
$$
\end{proposition}
\begin{proof}
We will prove the case where $|\Sigma| = 2$. The general case is similar. Let $s_1,s_2 \in \Sigma$, then we have chosen $U_1$ and $U_2$ so that $U_1 \cap U_2$ is simply connected, open, and contains no critical points of $w$. Thus we have that $w^{-1}(U_1 \cap U_2)$ is a deformation retract onto $V$ by Ehresmann's theorem (see e.g. \cite[Theorem 9.3]{vois2}). This means that $\HH^i(w^{-1}(U_1 \cap U_2), V) = 0$. Therefore $\HH^i(Y_1\cup Y_2,V) \cong \HH^i(Y_1,V) \oplus \HH^i(Y_2,V)$ by the relative Mayer--Vietoris long exact sequence. 
\end{proof}
The following result is the main result of this subsection.
\begin{theorem}\label{thm:poincare}
If $(Y,w)$ is a proper LG model and $\dim Y = d$, then 
$$
h^{i}(Y,V) = h^{2d-i}(Y,V)
$$
for all $i$.
\end{theorem}
\begin{proof}
To each point, $s \in \Sigma$, we can associate a perverse sheaf of vanishing cycles $\phi_{w-s} \mathbb{C}$ supported on the critical locus of $w$ in $w^{-1}(s)$ (see e.g. \cite[Proposition 4.2.8]{dim}), and the hypercohomology of $\phi_{w-s}\mathbb{C}$ sits in a long exact sequence
$$
\dots \rightarrow \mathbb{H}^{i-1}(w^{-1}(s),\phi_{w-s}\mathbb{C}) \rightarrow \HH^i(Y_s;\mathbb{C}) \xrightarrow{r_i} \HH^i(V;\mathbb{C}) \rightarrow \mathbb{H}^{i}(w^{-1}(s),\phi_{w-s}\mathbb{C}) \rightarrow \dots
$$
where the map $r_i$ is the natural restriction map. However, this is precisely the map in the long exact sequence for relative cohomology, thus we find that
\[
\mathbb{H}^{i-1}(w^{-1}(s),\phi_{w-s} \mathbb{C}) \cong \HH^i(Y_s,V;\mathbb{C})
\]
and therefore,
\begin{equation}\label{eq:relcoh}
h^i(Y,V) = \sum_{s \in \Sigma} \dim\, \mathbb{H}^{i-1}(w^{-1}(s),\phi_{w-s}\mathbb{C}).
\end{equation}
Let $\mathbb{D}$ denote the Verdier duality functor. We know that $\mathbb{D}\mathbb{C}_{Y_s} = \mathbb{C}_{Y_s}[2d]$ (\cite[Example 3.3.8]{dim}) where $d$ is the complex dimension of $Y_s$. Furthermore, for any constructible complex $\mathscr{F}^\bullet$ on $Y_s$, $\mathbb{D} (\phi_{w-s}\mathscr{F}^\bullet [-1])\cong (\phi_{w-s} \mathbb{D}\mathscr{F}^\bullet)[-1]$ (\cite[Proposition 4.2.10]{dim}).  Using Verdier duality \cite[Theorem 3.3.10]{dim}, we see that
\begin{align*}
\mathbb{H}^{i+1}(w^{-1}(s) ,\phi_{w-s}\underline{\mathbb{C}}_{Y_s}) &\cong \mathbb{H}^{i}(w^{-1}(s) ,\phi_{w-s}\underline{\mathbb{C}}_{Y_s}[-1]) \\
&\cong \mathbb{H}_c^{-i}(w^{-1}(s), \mathbb{D}\phi_{w-s}\underline{\mathbb{C}}_{Y_s}[-1])^\vee\\ &\cong \mathbb{H}_c^{-i}(w^{-1}(s), \phi_{w-s}\underline{\mathbb{C}}_{Y_s}[2d-1])^\vee\\
&\cong  \mathbb{H}_c^{2d-(i+1)}(w^{-1}(s), \phi_{w-s}\underline{\mathbb{C}}_{Y_s})^\vee.
\end{align*}
Since $w^{-1}(s)$ is itself compact it follows that
\begin{equation}\label{eq:pdperv}
\mathbb{H}^{i}(w^{-1}(s), \phi_{w-s}\underline{\mathbb{C}}_{Y_s})\cong \mathbb{H}^{2d-i}(w^{-1}(s), \phi_{w-s}\underline{\mathbb{C}}_{Y_s})^\vee.
\end{equation}
Therefore, combining (\ref{eq:relcoh}), (\ref{eq:pdperv}), and Proposition \ref{prop:locglob}, the theorem follows.
\end{proof}
\begin{remark}
Theorem \ref{thm:poincare} requires surprisingly few assumptions about $(Y,w)$. We need $w$ to be a proper morphism and $Y$ to be smooth but nothing more.
\end{remark}
\subsection{Computing $h^{2}(Y,V)$ of a proper LG model}

%
In this section, we will show that if $(Y,w)$ is a proper LG model and $V$ is a smooth fiber of $w$, then $h^2(Y,V) = h^2(Y,w)$ is an enumerative invariant of $(Y,w)$.
\begin{defn}\label{invk}
Let $(Y,w)$ be a proper LG model. Let $\Sigma$ be the set of critical values of $w$ and let $\rho_s$ be the number of irreducible components in $w^{-1}(s)$. Then
\[
k(Y,w) := \sum_{s\in \Sigma}(\rho_s-1).
\]
\end{defn}

The following result has been alluded to in work of Przyjalkowski and Shramov \cite{ps} and Przyjalkowski \cite{prz3}. In \cite{ps}, Przyjalkowski and Shramov begin with a smooth complete intersection Fano manifold $X$ of dimension $d$ in weighted projective space. They construct a proper LG model $(Y_X,w_X)$, which they expect to be mirror to $X$, and show that $k(Y_X,w_X) = h^{1,d-1}(X)$. In \cite{prz3}, Przyjalkowski proves a similar result for all Picard rank 1 Fano manifolds of dimension 3. In both cases, the following result is implicit.
\begin{theorem}\label{prop:folk}
Let $(Y,w)$ be a proper LG model of dimension $d$, let $V$ be a smooth fiber of $w$, and assume that $h^1(V)=0$. Then
$$
h^2(Y,V) = h^{2d-2}(Y,V) = k(Y,w).
$$
\end{theorem}
\begin{proof}
Recall that $\Sigma$ denotes the critical values of $w$. In Proposition \ref{prop:locglob}, we showed that if $H^i(Y,V)\cong \bigoplus_{s\in \Sigma}H^i(Y_s,V_s)$ where $Y_s$ is the preimage of a small disc $\mathbf{D}_s$ around $s \in \Sigma $ and $V_s$ is a generic smooth fiber above a point in $\mathbf{D}_s\setminus s$. Therefore, it's enough for us to show that $h^{2d-2}(Y_s,V_s) = \rho_s-1$. By Theorem \ref{thm:clcon}, $h^{2d-2}(Y_s) = h^{2d-2}(w^{-1}(s))$. 

Assume that $U_s :=w^{-1}(s)$ is normal crossings. We can now apply the Mayer--Vietoris spectral sequence (Theorem \ref{thm:mv}) to deduce that $H^{2d-2}(U_s)$ is a sum of subquotients of
$$
E^{i,2d-2-i}_1 = H^{2d-2-i}(U_s^{[i]}).
$$
However, $U_s^{[i]}$ has dimension $d-1-i$, so $H^{2d-2-i}(U_s^{[i]})=0$ if $i \neq 0$. Therefore $H^{2d-2}(Y_s)$ is a subquotient of $H^{2d-2}(U_s^{[0]})$. In particular, it is the kernel of
$$
H^{2d-2}(U_s^{[0]}) \longrightarrow H^{2d-2}(U_s^{[1]})
$$
which is just $H^{2d-2}(U_s^{[0]})$ for dimension reasons. Therefore, $h^{2d-2}(Y_s) = h^{2d-2}(U_s) = h^{2d-2}(U_s^{[0]}) = \rho_s$.

Now we can compute the dimension of the relative cohomology groups by the standard long exact sequence
$$
\dots \longrightarrow H^{2d-3}(V) \longrightarrow H^{2d-2}(Y_s,V) \longrightarrow H^{2d-2}(Y_s) \longrightarrow H^{2d-2}(V) \longrightarrow \dots.
$$
By monodromy invariance of $H^{2d-2}(V)$ and the local invariant cycle theorem, the map $H^{2d-2}(Y) \rightarrow H^{2d-2}(V)$ is surjective. By assumption, $H^{2d-3}(V) \cong H^1(V) \cong 0$. Therefore, it follows that $h^{2d-2}(Y_s,V) = \rho_s-1$.

Therefore if all fibers of $w$ are normal crossings then
$$
h^{2d-2}(Y,V) = \sum_{s\in \Sigma}h^{2d-2}(Y_s,V) = \sum_{s\in \Sigma}(\rho_s - 1).
$$
If the fibers of $w$ do not have normal crossings, then we can use Hironaka's theorem to blow up $Y$ repeatedly in connected smooth centers contained in fibers of $f$ to obtain a variety $\widetilde{Y}$ whose fibers have normal crossings. Let $\widetilde{w}$ be the composition of the morphism $\widetilde{Y} \rightarrow Y$ and $w$. Then $h^{2d-2}(Y) + k = h^{2d-2}(\widetilde{Y})$ where $k$ is the number of times we had to blow up $Y$. Furthermore, each blow up contributes one component to a singular fiber of $\widetilde{Y}$. Therefore, if $\widetilde{\rho}_s$ is the number of components of the fiber $\widetilde{w}^{-1}(s)$, then
\begin{equation}\label{numcompss}
\sum_{s\in \Sigma}(\widetilde{\rho}_s - 1) = k+ \sum_{s\in\Sigma}(\rho_s -1).
\end{equation}
The maps $H^{2d-2}(Y) \rightarrow H^{2d-2}(V)$ and $H^{2d-2}(\widetilde{Y}) \rightarrow H^{2d-2}(V)$ are both surjective and have kernel equal to $H^{2d-2}(Y,V)$ by the vanishing of $H^{2d-3}(V)$ so $h^{2d-2}(\widetilde{Y},V) = h^{2d+2}(\widetilde{Y}) -1$ and $h^{2d-2}(Y,V) = h^{2d-2}(Y) -1$. Thus 
\[
h^{2d-2}(Y,V)+ k = h^{2d-2}(\widetilde{Y},V) = \sum_{s\in \Sigma}(\widetilde{\rho}_s -1).
\]
Hence by (\ref{numcompss}) we have 
$$
h^{2d-2}(Y,V) = \sum_{s\in\Sigma}(\widetilde{\rho}_s - 1) - k = \sum_{s\in\Sigma}(\rho_s -1).
$$
as claimed.
\end{proof}

\subsection{The cohomology of $Y$}
Recall that in Section \ref{sect:relfilt}, we showed that the KKP Hodge numbers are the same as the dimensions of the Hodge-graded pieces of $H^i(Y,V;\mathbb{Q})$. The relative cohomology groups of $(Y,V)$ sit in a long exact sequence of mixed Hodge structures,
\[
\dots \longrightarrow H^i(Y,V;\mathbb{Q}) \longrightarrow H^i(Y;\mathbb{Q}) \longrightarrow H^i(V;\mathbb{Q}) \longrightarrow \dots.
\]
Therefore, to compute $f^{p,q}(Y,w)$, it will be enough to know $H^i(Y;\mathbb{Q}), H^i(V;\mathbb{Q})$, and the map $H^i(Y;\mathbb{Q}) \rightarrow H^i(V;\mathbb{Q})$ of mixed Hodge structures. In this section, we will show how to compute the mixed Hodge structure on $H^i(Y;\mathbb{Q})$ in terms of the cohomology of a tame compactification $(Z,f)$ of $(Y,w)$ and the limit mixed Hodge structure on $V$ at infinity.

More precisely, if $Y$ is of dimension $d$ then the mixed Hodge structure on $H^{2d-i}_c(Y;\mathbb{Q})$ is dual to that on $H^{i}(Y;\mathbb{Q})$ by work of Fujiki \cite[Theorem 1.7.1]{fujiki}, so we may compute $H^{2d-i}_c(Y;\mathbb{Q})$ instead of $H^{i}(Y;\mathbb{Q})$. Furthermore, if $(Z,f)$ is a tame compactification of a proper LG model $(Y,w)$ and $D_\infty = Z \setminus Y = f^{-1}(\infty)$, then there is a long exact sequence of mixed Hodge structures,
\[
\dots \longrightarrow H^i_c(Y;\mathbb{Q}) \longrightarrow H^i(Z;\mathbb{Q}) \rightarrow H^i(D_\infty;\mathbb{Q}) \longrightarrow \dots.
\]
We would like to understand this sequence.

Fix a proper LG model $(Y,w)$ and let $(Z,f)$ be a tame compactification. We will use $\mathbf{D}_\infty$ to denote a small disc in $\mathbb{P}^1$ containing $\infty$ and no other critical values of $f$. Let  $Y_\infty= f^{-1}(\mathbf{D}_\infty)$. By Theorem \ref{thm:clcon}, $Y_\infty$ admits a strong deformation retract to $D_\infty$, thus we may equip $H^i(Y_\infty;\mathbb{Q})$ with a mixed Hodge structure. If we choose $V$ to be a fiber over some point in $\mathbf{D}_\infty \setminus \infty$ and equip $H^i(V)$ with the limit mixed Hodge structure at $\infty$, then 
\[
H^i(Z;\mathbb{Q}) \xrightarrow{t_i} H^i(Y_\infty;\mathbb{Q}) \xrightarrow{r_i} H^i(V;\mathbb{Q})
\]
is a morphism of mixed Hodge structures obtained by pullback along the inclusions $V \subset Y_\infty \subset Z$. The map $r_i$ here is the same as the map $r_i$ in Theorem \ref{thm:csex}. For the remainder of this section, all cohomology groups will be taken with rational coefficients.
\begin{lemma}\label{lem:injti}
The kernel of $r_i$ is in the image of $t_i$.
\end{lemma}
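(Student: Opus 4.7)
The plan is to combine the Clemens--Schmid exact sequence with the long exact sequence of the pair $(Z, Y)$, connecting the two via excision. Specifically, the Clemens--Schmid sequence gives that $\ker r_i$ equals the image of $q_i: H^i(X, X^\times; \mathbb{C}) \to H^i(X; \mathbb{C})$, so it suffices to show that every class in the image of $q_i$ lifts to a class on $Z$.

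First I would invoke excision to identify $H^i(X, X^\times; \mathbb{C})$ with $H^i(Z, Y; \mathbb{C})$. The setup is favorable: $X$ is an open neighborhood of $D_\infty$, the closed subset $Z \setminus X$ is contained in $Y$ (since $D_\infty \subseteq X$), and $Y$ is open in $Z$, so the closure of the excised set lies in the interior of $Y$. Excising $Z \setminus X$ from the pair $(Z, Y)$ then produces the pair $(X, Y \cap X) = (X, X^\times)$, giving the desired isomorphism $H^i(Z, Y; \mathbb{C}) \xrightarrow{\sim} H^i(X, X^\times; \mathbb{C})$.

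Next I would write the long exact sequence of the pair $(Z, Y)$, which produces a map $j: H^i(Z, Y; \mathbb{C}) \to H^i(Z; \mathbb{C})$. By naturality of the long exact sequence of a pair with respect to the inclusion $(X, X^\times) \hookrightarrow (Z, Y)$, the square
\[
\begin{array}{ccc}
H^i(Z, Y; \mathbb{C}) & \xrightarrow{j} & H^i(Z; \mathbb{C}) \\
\downarrow \cong & & \downarrow t_i \\
H^i(X, X^\times; \mathbb{C}) & \xrightarrow{q_i} & H^i(X; \mathbb{C})
\end{array}
\]
commutes, where the left vertical arrow is the excision isomorphism. Therefore any class $\beta \in H^i(X, X^\times; \mathbb{C})$ has a unique lift $\tilde{\beta} \in H^i(Z, Y; \mathbb{C})$, and $q_i(\beta) = t_i(j(\tilde{\beta}))$ lies in the image of $t_i$. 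Combining this with $\ker r_i = \mathrm{image}(q_i)$ completes the argument.

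There is no substantive obstacle here; the only thing to be careful about is the excision step, which requires that $Z \setminus X$ be a closed subset of $Z$ whose closure lies in the interior of $Y$. Both conditions are immediate from the fact that $X$ is the preimage of an open disc about $\infty$ and $D_\infty \subseteq X$, so the argument is essentially formal once Clemens--Schmid and excision are lined up.
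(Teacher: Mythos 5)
Your proof is correct and follows essentially the same strategy as the paper's: both identify $\ker r_i$ with the image of $q_i\colon H^i(X,X^\times;\mathbb{C})\to H^i(X;\mathbb{C})$ via the Clemens--Schmid sequence, and both lift classes of $H^i(X,X^\times;\mathbb{C})$ to relative classes on $Z$ by excision and then conclude by naturality of the long exact sequence of a pair. The only (harmless) difference is that you excise directly into the pair $(Z,Y)$, whereas the paper works with $(Z,Z_\Sigma)$, the complement of all singular fibres, and then projects onto the summand corresponding to $\infty$; your choice is slightly more economical.
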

\begin{proof}
If we let $\Sigma$ be the set of all singular values of $f$. Let $\mathbf{D}_s$ be a small disc around each $s\in \Sigma$ and let $Y_s = f^{-1}(\mathbf{D}_s)$, $Y_s^* = Y_s \setminus f^{-1}(s)$, and $Z_\Sigma = Z \setminus \sqcup_{s\in\Sigma}f^{-1}(s)$. We have a commutative diagram
\[
\begin{tikzcd}
H^i(Z,Z_\Sigma;\mathbb{Q}) \ar[r]\ar[d] & \bigoplus_{s\in \Sigma} H^i(Y_s, Y_s^*;\mathbb{Q}) \ar[d] \\
H^i(Z;\mathbb{Q}) \ar[r] & \bigoplus_{s \in \Sigma} H^i({Y_s};\mathbb{Q})
\end{tikzcd}
\]
The upper horizontal map is an isomorphism by excision. Therefore restricting to the fiber over $\infty$, we have another commutative diagram
\[
\begin{tikzcd}
H^i(Z,Z_\Sigma;\mathbb{Q}) \ar[r] \ar[d] & H^i(Y_\infty,Y_\infty^*;\mathbb{Q}) \ar[d] &  \\
H^i(Z;\mathbb{Q}) \ar[r] & H^i(Y_\infty;\mathbb{Q}) \ar[r] & H^i(V;\mathbb{Q})
\end{tikzcd}
\]
where the upper arrow is surjective and the diagram commutes. The image of the vertical right arrow is the kernel of lower right horizontal arrow by Theorem \ref{thm:csex}. The upper horizontal arrow is surjective and the image of the vertical right arrow is the kernel of $H^i(Y_\infty;\mathbb{Q}) \rightarrow H^i(V;\mathbb{Q})$. By commutativity of the square, this means that the image of
\[
H^i(Z,Z_\Sigma;\mathbb{Q}) \longrightarrow H^i(Z;\mathbb{Q}) \longrightarrow H^i(Y_\infty;\mathbb{Q})
\]
contains the kernel of $H^i(Y_\infty;\mathbb{Q}) \rightarrow H^i(V;\mathbb{Q})$. Therefore, the image of $H^i(Z;\mathbb{Q}) \rightarrow H^i(Y_\infty;\mathbb{Q})$ contains the kernel of $H^i(Y_\infty;\mathbb{Q}) \rightarrow H^i(V;\mathbb{Q})$.
\end{proof}
By  Theorem \ref{thm:csex}, $H^i(Y_\infty;\mathbb{Q}) \rightarrow H^i(V;\mathbb{Q})$ has image inside of $\ker N_i$, where $H^i(V;\mathbb{Q})$ is equipped with the limit mixed Hodge structure at $\infty$.
\begin{theorem}\label{thm:Yc}
Let $(Y,w)$ be a proper LG model. There is a short exact sequence of mixed Hodge structures
$$
0 \longrightarrow \mathscr{Q}_{i-1} \longrightarrow H^i_c(Y;\mathbb{Q}) \longrightarrow \mathscr{K}_i \longrightarrow 0
$$
where
$$
\mathscr{Q}_{i-1} = \mathrm{coker}(H^{i-1}(Z;\mathbb{Q}) \rightarrow H^{i-1}(Y_\infty;\mathbb{Q}) \rightarrow \ker N_{i-1})
$$
and $\mathscr{K}_i = \ker(H^i(Z;\mathbb{Q}) \rightarrow H^i(D_\infty;\mathbb{Q}))$.
\end{theorem}
\begin{proof}
Applying the long exact sequence for compactly supported cohomology for the triple $Z,D_\infty$ and $Y= Z \setminus D_\infty$, we see that there is a short exact sequence of mixed Hodge structures,
$$
0 \longrightarrow \mathscr{Q}_{i-1}' \longrightarrow H^i_c(Y;\mathbb{Q}) \longrightarrow \mathscr{K}_i \longrightarrow 0
$$
where $\mathscr{K}_i$ is the kernel of the restriction map $H^i(Z;\mathbb{Q}) \rightarrow H^i(D_\infty;\mathbb{Q})$ and $\mathscr{Q}_{i-1}'$ is the kernel of $H^{i-1}(Z;\mathbb{Q}) \rightarrow H^{i-1}(D_\infty;\mathbb{Q})$. Our goal is to show that $\mathscr{Q}_{i-1}'$ is isomorphic to $\mathscr{Q}_{i-1}$ as defined in the statement of the current proposition. For the sake of notation, we will talk about $\mathscr{Q}_i$ and $\mathscr{Q}_{i}'$ instead of $\mathscr{Q}_{i-1}$ and $\mathscr{Q}_{i-1}'$.

Denote by $C_i$ the image of the map $H^i(Z;\mathbb{Q}) \rightarrow H^i(Y_\infty;\mathbb{Q})$ and let $M_i$ be the kernel of $H^i(Y_\infty;\mathbb{Q}) \rightarrow H^i(V;\mathbb{Q})$. Then we have an injection of $M_i$ into $C_i$ by Lemma \ref{lem:injti} and the following commutative diagram with exact rows and columns
\begin{equation}\label{dag:3x3}
\begin{tikzcd}
& 0 \ar[d] &  0 \ar[d] & 0 \ar[d] &  \\
 0 \ar[r] & M_i \ar[r] \ar[d, "\mathrm{id}"] & C_i \ar[r]\ar[d] & C_i/M_i \ar[r] \ar[d] & 0 \\
 0 \ar[r] & M_i \ar[d] \ar[r] & H^i(Y_\infty;\mathbb{Q}) \ar[r]\ar[d] & \ker N_i \ar[r] \ar[d]& 0 \\
  &0 \ar[d] & \mathscr{Q}_i' \ar[d] & \mathscr{Q}_i \ar[d]& \\
  &0 & 0 & 0 & 
\end{tikzcd}
\end{equation}
where we have, by definition, $\mathscr{Q}_i' = \mathrm{coker}(t_i)$, and $\mathscr{Q}_i = \mathrm{coker}(C_i/M_i \rightarrow \ker N_i)$. Since the map from $C_i$ to $C_i/M_i$ is surjective, we can identify $\mathscr{Q}_i$ with the cokernel of the map $r_i \cdot t_i$. There is a homomorphism from $\mathscr{Q}_i'$ to $\mathscr{Q}_i$ which extends (\ref{dag:3x3}) to a commutative diagram
\begin{equation*}
\begin{tikzcd}
& 0 \ar[d] &  0 \ar[d] & 0 \ar[d] &  \\
 0 \ar[r] & M_i \ar[r] \ar[d, "\mathrm{id}"] & C_i \ar[r]\ar[d] & C_i/M_i \ar[r] \ar[d] & 0 \\
 0 \ar[r] & M_i \ar[d] \ar[r] & H^i(Y_\infty;\mathbb{Q}) \ar[r]\ar[d] & \ker N_i \ar[r] \ar[d]& 0 \\
0 \ar[r]  &0 \ar[d]\ar[r] & \mathscr{Q}_i' \ar[r] \ar[d] & \mathscr{Q}_i \ar[d]\ar[r]& 0 \\
  &0 & 0 & 0 & 
\end{tikzcd}
\end{equation*}
whose columns are exact and whose top two rows are exact. By the nine lemma, it then follows that the bottom row is exact, so there is an isomorphism of mixed Hodge structures from $\mathscr{Q}_i$ to $\mathscr{Q}_i'.$
\end{proof}


\section{The Hodge numbers of an LG model in three dimensions}\label{sect:hn3fold}\label{ex:three}

In this section, we will give a concrete method of computing the KKP Hodge numbers of certain LG models which appear as prospective mirrors of Fano threefolds.


\subsection{Expected properties of LG mirrors of Fano threefolds}\label{sect:expectations}

In this subsection, Our goal is to define a certain class of LG models whose KKP Hodge numbers we will compute in subsequent subsections. The motivation for our definition comes from mirror symmetry, which we will now discuss briefly. The reader can find a more detailed, but still largely informal, explanation of the ideas in this subsection either in the work of Auroux \cite{aur1,aur2} or in the work of Katzarkov, Kontsevich, and Pantev \cite[Section 2]{kkp2}. For the purposes of our discussion, we will use the phrase ``log Calabi--Yau'' to refer to the complement of a reduced simple normal crossings anticanonical divisor in a smooth projective variety.

If one has a pair composed of a Fano manifold $X$ and a smooth anticanonical divisor $W$, then $X \setminus W$ is log Calabi--Yau.  We expect that this log Calabi--Yau manifold has a mirror log Calabi--Yau manifold $Y$ (see e.g. \cite{aur1}). Compactifying $X\setminus W$ to $X$ should be thought of as being equivalent to equipping $Y$ with a function $w$, so that $(Y,w)$ is mirror dual to $(X,s_W)$ in the sense of Definition \ref{defn:mirror}. Here $s_W$ is the global section of $\omega_X^{-1}$ whose vanishing locus is $W$. We expect that a smooth fiber $V$ of $w$ is mirror to $W$. The mirror to a smooth, compact, Calabi--Yau manifold is expected to be itself a smooth, compact, Calabi--Yau manifold, therefore, we can expect the fibers of $w$ to be compact.

Hence, we can expect that if $X$ is Fano and admits a smooth anticanonical divisor $W$, then $X$ admits a mirror LG model which is proper. All 3-dimensional Fano manifolds admit a smooth anticanonical section, so we expect that every Fano threefold admits a proper LG model as its mirror dual. If $X$ is a Fano threefold, then $W$ is a K3 surface. The mirror to a K3 surface is again a K3 surface, hence the fibers of $w$ should in general be projective K3 surfaces.

Let us assume that $X$ is a Fano manifold of dimension $d$, $(Y,w)$ is its mirror LG model, $W$ is a smooth anticanonical hypersurface in $X$, and $V$ is a smooth fiber of $w$. Let $T_{d-1} \in \mathrm{GL}(H^{d-1}(V;\mathbb{C}))$ be the monodromy automorphism associated to a small counterclockwise loop around $\infty$. Then, by (\ref{hnms}) one expects that $H^{d-1}(V;\mathbb{C}) \cong \bigoplus_{i=0}^{d-1} H^{i,i}(W)$. A general expectation from mirror symmetry is that the action of $N_{d-1}=\log \,T_{d-1}\in \mathrm{End}(H^{d-1}(V;\mathbb{C}))$ should be identified with the cup product $(-)\cup c_1(X)|_W$ \cite{kkp2}. Since $X$ is Fano (thus $c_1(X)$ is an ample class), the hard Lefschetz theorem implies that the operator $(-)\cup c_1(X)|_W$ has the property that $((-)\cup c_1(X)|_W)^{d} =0$ but $((-1)\cup c_1(X)|_W)^{d-1} \neq 0$. Therefore, if $(Y,w)$ is mirror to $X$, we expect that $N_{d-1}^{d} =0$ but $N_{d-1}^{d-1} \neq 0$. If $N_{d-1}$ satisfies these conditions we say that $T_{d-1}$ is {\em maximally unipotent}. 

Summarizing this discussion, if $X$ is a Fano threefold, we expect that there is an LG model $(Y,w)$ which is mirror to $X$ and so that:
\begin{enumerate}[--]
\item[--] $w$ is proper,
\item[--] the smooth fibers of $w$ are K3 surfaces,
\item[--] the monodromy action on $H^2(V)$ around infinity is maximally unipotent,
\item[--] $(Y,w)$ admits a tame compactification $(Z,f)$ so that $f^{-1}(\infty)$ is an anticanonical hypersurface.
\end{enumerate}
The monodromy of families of K3 surfaces is well understood \cite[\S 4(d)]{mor} and as a result, we may impose a geometric condition on the tame compactification $(Z,f)$ which forces monodromy around infinity to be maximally unipotent.
\begin{defn}[Kulikov, \cite{kul}]
Let $g: \mathscr{U} \rightarrow \mathbf{D}$ be a projective fibration over a complex disc containing 0 whose fibers away from 0 are smooth K3 surfaces. We say that $g$ is a {\em semistable type III degeneration of K3 surfaces} if 
\begin{enumerate}
\item$K_{\mathscr{U}}$ is trivial,
\item$g^{-1}(0)$ is a simple normal crossings union of smooth rational surfaces whose dual intersection complex is a triangulation of $S^2$.
\end{enumerate}
\end{defn}
Suppose that $g:\mathscr{U}\rightarrow \mathbf{D}$ is a type III degeneration of K3 surfaces. Then $N_2$ has one nontrivial Jordan block of rank 2 and its remaining Jordan blocks are trivial (see e.g. \cite{kul,fs}). In particular, $T_2$ is maximally unipotent.
\begin{remark}\quad
\begin{enumerate}
\item There exist degenerations $g :\mathscr{U} \rightarrow \mathbf{D}$ of K3 surfaces with maximally unipotent monodromy which cannot be modified birationally to produce a type III degeneration of K3 surfaces. However, examples of this seem to be rare in practice. 
\item As the notation {\em type III} indicates, there are semistable type I and II degenerations of K3 surfaces as well (see \cite{kul} for details). In the semistable type I case, monodromy is trivial. In the semistable type II case, $N_2$ has the property that $N_2^2 = 0$ but $N_2 \neq 0$. Therefore, these types of degenerations should not appear in LG models which are mirror to Fano threefolds.
\end{enumerate}
\end{remark}

Let $(Y,w)$ be a proper LG model and assume that $\dim Y = 3$. Let $(Z,f)$ be a tame compactification of $(Y,w)$.  In the examples that we will look at, $Z$ is rational hence we have that
\[
h^{1,0}(Z) = h^{2,0}(Z) = h^{3,0}(Z) = 0.
\]
Therefore if $X$ is a Fano threefold then $X$ will usually admit a mirror LG model $(Y,w)$ which has a tame compactification of the following type.
\begin{defn}\label{defn:wf}
Let $(Y,w)$ be a proper LG model and assume that $\dim Y = 3$. We say that a tame compactification $(Z,f)$ of $(Y,w)$  is {\em type III} if 
\begin{enumerate}
\item $h^{i,0}(Z) = 0$ for $i \neq 0$,
\item the fiber $D_\infty$ over $\infty$ of $f$ is a type III degeneration of K3 surfaces
\end{enumerate}
\end{defn}
\begin{remark}
The invariants $h^{i,0}(Z)$ are birational invariants of $Z$, therefore if (1) holds for any tame compactification, it holds for any tame compactification. Condition (2) is essentially a relative minimality condition. If $(Z,f)$ is a type III compactification, one can sometimes produce other type III compactifications by making specific birational modifications, so type III compactifications are not unique in general.
\end{remark}
\begin{remark}
In \cite{prz}, Przyjalkowski shows that for all Fano threefolds with very ample anticanonical bundle there are (prospective) mirror LG models which admit type III compactifications. In Section \ref{sect:toric}, we will show that all weak Fano toric threefolds have (prospective) LG mirrors which admit type III compactifications.
\end{remark}

Before proceeding to the main theorem in this section, we will explain a general fact about type III degenerations of K3 surfaces. We remark that the most difficult part of this result follows from work of Friedman and Scattone \cite[Proposition 7.2]{fs}.
\begin{lemma}\label{lemma:t3}
Let $g:\mathscr{U} \rightarrow \mathbf{D}$ be a type III degeneration of K3 surfaces and let $U$ be $g^{-1}(0)$ be the degenerate fiber of $g$. Then $H^1(U) \cong  H^3(U) \cong 0$.
\end{lemma}
\begin{proof}
By the Mayer--Vietoris spectral sequence, $H^1(U)$ is isomorphic to $E_2^{1,0} \oplus E_2^{0,1} $. The group $E^{0,1}_2$ is a subquotient of $E_1^{0,1} = H^1(U^{[0]})$ which vanishes because all components of $U$ are rational surfaces and hence $H^1(U_i) = 0$ for all $i$. According to Proposition \ref{cor:mor}, $E^{p,0}_2 = H^p(|\Gamma|)$ where $|\Gamma|$ is the dual intersection complex of $U$. In our case $|\Gamma|$ is homeomorphic to $S^2$ so it follows that $E^{1,0}_2 = 0$. Therefore, $H^1(U)$ must be $0$. 

For the same reasons, $H^3(U)$  is isomorphic to a direct sum of subquotients of the groups $E^{0,3}_1 = H^3(U^{[0]}), E_{1}^{1,2} = H^2(U^{[1]})$ and $E_{1}^{2,1} = H^1(U^{[2]})$. The first vanishes since the components of $U$ are rational and the third vanishes since $U^{[2]}$ is a union of points. It is shown by Friedman and Scattone \cite[Proposition 7.2]{fs} that the differential $d:E_1^{0,2} \rightarrow E_1^{1,2}$ is surjective. Therefore $E_2^{1,2}=0$ and hence $H^3(U)=0$. 
\end{proof}

\subsection{The main theorem}
In this subsection, we will compute the KKP Hodge numbers of LG models which admit type III compactifications. We introduce the following notation.
\begin{defn}\label{def:ph}
Let $(Y,w)$ be a proper LG model so that $Y$ is of dimension 3, and let $V$ be a smooth fiber of $w$. Then $ph(Y,w)$ will denote the dimension of the cokernel of
\[
H^2(Y) \longrightarrow H^2(V).
\]
\end{defn}
The rest of this section will be devoted to proving the following result.
\begin{theorem}\label{thm:3fold}
Let $(Y,w)$ be a proper LG model which admits a type III compactification $(Z,f)$. Then
\begin{align*}
f^{3,0}(Y,w) & = f^{0,3}(Y,w) = 1,\\ 
f^{1,1}(Y,w) &= f^{2,2}(Y,2) = k(Y,w),\\  
f^{2,1}(Y,w) &= f^{1,2}(Y,w) = ph(Y,w) - 2 + h^{2,1}(Z)
\end{align*}
where $k(Y,w)$ is as in Definition \ref{invk}.
\end{theorem}
The proof of Theorem \ref{thm:3fold} will be given over the course of  several lemmas and propositions in Subsection \ref{sect:proof3fold}. Let us briefly discuss Theorem \ref{thm:3fold} before proceeding with its proof. If $(Y,w)$ is a proper LG model in dimension 3 which admits a type III compactication, we may arrange the invariants $f^{p,q}(Y,w)$ into a ``Hodge diamond''. Theorem \ref{thm:3fold} says that this Hodge diamond takes the following form.
$$
\begin{matrix}
&&&0&&& \\
&&0&&0&&\\
&0&& k(Y,w) &&0& \\
1\qquad \, \,&& ph(Y,w)-2 + h^{1,2}(Z) && ph(Y,w)-2 + h^{2,1}(Z) &&\qquad \,\,1 \\
&0&& k(Y,w) &&0& \\
&&0&&0&&\\
&&&0&&&
\end{matrix}
$$
Therefore, the Hodge diamond of an LG model admitting a type III compactification looks like the Hodge diamond of a Fano threefold reflected across the appropriate diagonal. This should be seen as support for the fact that the invariants $f^{p,q}(Y,w)$ are appropriate for studying mirror symmetry.

\subsection{The proof of Theorem \ref{thm:3fold}}\label{sect:proof3fold}
Roughly, the proof of Theorem \ref{thm:3fold} consists of two separate parts. First, we will show that $f^{p,q}(Y,w) = 0$ if $p + q \neq 3$ and $(p,q) \neq (1,1)$ or $(2,2)$. Finally we will compute $f^{p,3-p}(Y,w)$.

\begin{proposition}\label{prop:vanish1}
Let $(Y,w)$ be a proper LG model, let $\dim Y = 3$ and assume that $(Y,w)$ admits a type III compactification. Then 
\[
H^0(Y,V) \cong H^1(Y,V) \cong H^5(Y,V) \cong H^6(Y,V) \cong 0.
\]
Hence, $f^{p,q}(Y,w) = 0$ if $p+q \leq 1$ or $p+q \geq 5$.
\end{proposition}
\begin{proof}
By Theorem \ref{thm:poincare}, this is equivalent to showing that $H^5(Y,V)$ and $H^6(Y,V)$ vanish. Recall the long exact sequence in relative cohomology
\begin{equation}\label{eq:les}
\dots \longrightarrow H^{i}(V) \longrightarrow H^{i+1}(Y,V) \longrightarrow H^{i+1}(Y) \longrightarrow \dots.
\end{equation}
By Theorem \ref{thm:Yc}, we have that $H^0_c(Y)$ is isomorphic to the kernel of
\begin{equation}\label{eq:identify}
H^0(Z) \longrightarrow H^0(D_\infty).
\end{equation}
This map is an isomorphism, so $H^0_c(Y) \cong H^6(Y) = 0$. Therefore, by (\ref{eq:les}) and the fact that $H^5(V) = 0$, we can conclude that $H^6(Y,V) = 0$. Using Theorem \ref{thm:poincare}, we may conclude that that $H^0(Y,V)=0$ as well.

We have already noted that (\ref{eq:identify}) is an isomorphism, therefore $\mathscr{Q}_0=0$, so $H^1_c(Y) \cong \mathscr{K}_1$. By assumption, $H^1(Z)=0$, hence $\mathscr{K}_1 =0$. Therefore $H^1_c(Y) \cong H^5(Y)=0$. Appealing to Equation (\ref{eq:les}), we then have that $H^5(Y,V)$ is isomorphic to the cokernel of the restriction map from $H^4(Y)$ to $H^4(V)$. Since $H^4(V)$ is monodromy invariant, it follows from the local invariant cycles theorem that this cokernel is trivial and hence $H^5(Y,V)=0$ and therefore, $H^2(Y,V) = 0$ by Proposition \ref{thm:poincare}.
\end{proof}
\begin{defn}
A mixed Hodge structure $(R,F,W)$ is said to be {\em Hodge--Tate} if $i^{p,q}(R) = 0$ if $p \neq q$.
\end{defn}
\begin{proposition}\label{prop:lowhn}
Let $(Y,w)$ be a proper LG model, let $\dim Y = 3$ and assume that $(Y,w)$ admits a type III compactification. Then 
the mixed Hodge structures on $H^2(Y,V;\mathbb{Q})$ and $H^4(Y,V;\mathbb{Q})$ are pure Hodge structures of weight 2 and 4 respectively which are Hodge--Tate. In other words, $h^{p,2-p}(Y,V) = 0$ if $p \neq 1$, and $h^{p,4-p}(Y,V) = 0$ if $p \neq 2$.
\end{proposition}
\begin{proof}
By Lemma \ref{lemma:t3} and the fact that $D_\infty$ is a type III degeneration of K3 surfaces, $H^1(D_\infty;\mathbb{Q}) \cong 0$. Therefore, $\mathscr{Q}_1 \cong 0$. By the assumption that $Z$ is smooth and projective, $h^{2,0}(Z) = h^{0,2}(Z)$, so it follows that $H^2_c(Y;\mathbb{Q})$ (which is isomorphic to $\mathscr{K}_2$) has $h^{p,q}(H^2_c(Y;\mathbb{Q})) \neq 0$ only if $(p,q) = (1,1)$. Duality between $H_c^2(Y;\mathbb{Q})$ and $H^4(Y;\mathbb{Q})$ then tells us that the only possibly nonzero Deligne--Hodge number of $H^4(Y;\mathbb{Q})$ is $i^{2,2}$. Since $H^3(V;\mathbb{Q}) = 0$ it follows that $H^4(Y,V;\mathbb{Q})$ is a quotient of $H^4(Y;\mathbb{Q})$. Therefore, $H^4(Y,V;\mathbb{Q})$ is, pure, Hodge--Tate, and of weight 4.

A similar argument applies for $H^2(Y,V;\mathbb{Q})$. By Lemma \ref{lemma:t3}, $H^3(D_\infty;\mathbb{Q}) \cong 0$. Therefore, $\mathscr{Q}_3 = 0$ and $H^4_c(Y;\mathbb{Q}) \cong \mathscr{K}_4$. Since $h^{p,q}(H^4(Z;\mathbb{Q})) = 0$ unless $(p,q) = (2,2)$, the same is then true for $H^4_c(Y;\mathbb{Q})$. Duality between $H_c^4(Y;\mathbb{Q})$ and $H^2(Y;\mathbb{Q})$ then tells us that $H^2(Y;\mathbb{Q})$ carries a pure Hodge structure with the property that $i^{p,q}=0$ unless $(p,q) = (1,1)$. Since $H^1(V;\mathbb{Q}) = 0$, it follows that $H^2(Y,V;\mathbb{Q})$ also has the property that $i^{p,q}=0$ unless $(p,q) = (1,1)$. 
\end{proof}
\begin{corollary}\label{cor:vanish2}
Let $(Y,w)$ be a proper LG model, let $\dim Y = 3$ and assume that $(Y,w)$ admits a type III compactification. Then 
\[
f^{2,0}(Y,w) = f^{0,2}(Y,w) = f^{1,3}(Y,w) = f^{3,1}(Y,w) = 0.
\]
\end{corollary}
\begin{proof}
Apply Proposition \ref{prop:lowhn} and Theorem \ref{thm:filt}.
\end{proof}
\begin{corollary}\label{cor:vanish3}
Let $(Y,w)$ be a proper LG model, let $\dim Y = 3$ and assume that $(Y,w)$ admits a type III compactification. Then $f^{1,1}(Y,w) = f^{2,2}(Y,w) = k(Y,w)$.
\end{corollary}
\begin{proof}
By \cite[Lemma 2.19, Lemma 2.21]{kkp2}, we have that $f^{2,0}(Y,w) + f^{1,1}(Y,w)  + f^{0,2}(Y,w) = h^{2}(Y,V)$, therefore, Proposition \ref{prop:lowhn}, along with Theorem \ref{thm:filt} implies that $f^{1,1}(Y,w) = h^2(Y,V)$. By Theorem \ref{prop:folk}, $h^{2}(Y,V) = k(Y,w)$, hence $f^{1,1}(Y,w) = k(Y,w)$. A nearly identical argument shows that $f^{2,2}(Y,w) = k(Y,w)$.
\end{proof}

\begin{lemma}\label{lem:ses2}
Let $(Y,w)$ be a proper LG model, let $\dim Y = 3$, and assume that $(Y,w)$ admits a type III compactification $(Z,f)$. Then
\begin{align*}
i^{3,3}(H^3(Y;\mathbb{Q})) &= 1, \quad i^{2,2}(H^3(Y;\mathbb{Q})) = ph(Y,w) - 3, \\ i^{1,2}(H^3(Y;\mathbb{Q})) &= i^{2,1}(H^3(Y;\mathbb{Q}))= h^{1,2}(Z).
\end{align*}
\end{lemma}
\begin{proof}
By Theorem \ref{thm:Yc}, we have a short exact sequence of mixed Hodge structures,
\begin{equation}\label{eq:blah}
0 \longrightarrow \mathscr{Q}_2\longrightarrow H_c^3(Y) \longrightarrow \mathscr{K}_3 \longrightarrow 0.
\end{equation}
According to Lemma \ref{lemma:t3}, $H^3(D_\infty;\mathbb{Q}) = 0$. Therefore $\mathscr{K}_3 \cong H^3(Z,\mathbb{Q})$. Thus to compute $i^{p,q}(H^3_c(Y;\mathbb{Q}))$, we just need to compute $\mathscr{Q}_2$ along with its mixed Hodge structure. It is well known (see e.g. \cite[pp. 113]{mor}) that the limit mixed Hodge structure of a type III degeneration of K3 surfaces satisfies $W_0 =W_1 \cong \mathbb{Q}, W_2 = W_3 \cong \mathbb{Q}^{21}$, $W_4 \cong \mathbb{Q}^{22}$ and $F^2 \cong \mathbb{C}, F^1 \cong \mathbb{C}^{21}$ and $F^0 \cong \mathbb{C}^{22}$. Let $H_\mathrm{lim}$ denote $H^2(V;\mathbb{Q})$ equipped with the limit mixed Hodge structure at infinity. Then
$$
i^{0,0}(H_\mathrm{lim}) = 1, \quad i^{1,1}(H_\mathrm{lim}) =20, \quad i^{2,2}(H_\mathrm{lim}) = 1
$$
and $i^{p,q}(H_\mathrm{lim}) = 0$ otherwise. If $T_2$ is the monodromy transformation acting on $H^2(V;\mathbb{Q})$ coming from a counterclockwise rotation around $\infty$ and $N_2 = \log T_2$, then $\ker N_2$ maps $H_\mathrm{lim}$ to $H_\mathrm{lim}(-1)$. Thus we obtain maps
$$
n_i :\mathrm{Gr}_F^i\mathrm{Gr}^W_{2i}H_\mathrm{lim}  \longrightarrow \mathrm{Gr}_F^{i-1} \mathrm{Gr}^W_{2i-2}H_\mathrm{lim}(-1)
$$
for $i=0,1,2$ induced by $N_2$. The map $n_2$ is injective, $n_1$ is surjective and $n_0$ is zero. Hence $\ker n_2 \cong 0, \ker n_1 \cong \mathbb{C}^{19}$ and $\ker n_2 \cong \mathbb{C}$. Therefore
$$
i^{2,2}(\ker N_2) = 0, \quad i^{1,1}(\ker N_2) = 19, \quad i^{0,0}(\ker N_2) = 1
$$
and $h^{p,q}(\ker N_2) = 0$ otherwise. By the global invariant cycles theorem the image of the restriction map $H^2(Z;\mathbb{Q}) \rightarrow H^2(V;\mathbb{Q})$ has image which is invariant under $N_2$. Since the action of $N_2$ on $H_\mathrm{lim}$ is induced by the action of $N_2$ on $H^2(V;\mathbb{Q})$, it follows that the image of the map $H^2(Z;\mathbb{Q})\rightarrow H^2(V;\mathbb{Q})$ is contained in $\ker N_2$. Therefore, $\mathscr{Q}_2$ satisfies
$$
i^{1,1}(\mathscr{Q}_2) = ph(Y,w) - 3, \quad i^{0,0}(\mathscr{Q}_2) = 1
$$
and $h^{p,q}(\mathscr{Q}_2) = 0$ otherwise. Therefore, by additivity of Hodge numbers in exact sequences, the nonzero Hodge numbers of $H^3_c(Y;\mathbb{Q})$ are
\begin{align*}
i^{0,0}(H^3_c(Y;\mathbb{Q})) &= 1, \quad i^{1,1}(H^3_c(Y;\mathbb{Q})) = ph(Y,w) - 3,\\  i^{2,1}(H^3_c(Y;\mathbb{Q})) &= i^{1,2}(H^3_c(Y;\mathbb{Q}))=h^{1,2}(Z).
\end{align*}
Since we have that $H_c^3(Y;\mathbb{Q})^\vee(-3)$ is isomorphic as a mixed Hodge structure to $H^3(Y;\mathbb{Q})$, we see that $i^{p,q}(H^3_c(Y;\mathbb{Q})) = i^{3-p,3-q}(H^3(Y;\mathbb{Q}))$. Thus the lemma follows as claimed.
\end{proof}

\begin{proposition}\label{prop:hodgy}
If $(Y,w)$ is a proper LG model which admits a type III compactification $(Z,f)$ then
\begin{align*}
f^{3,0}(Y,w) = f^{0,3}(Y,w) = 1, \quad f^{2,1}(Y,w) = f^{1,2}(Y,w) = ph(Y,w)-2 + h^{1,2}(Z).
\end{align*}
\end{proposition}
\begin{proof}
According to Lemma \ref{lem:ses2}, the nonzero Hodge numbers of $H^3(Y)$ are
\begin{align*}
i^{3,3}(H^3(Y;\mathbb{Q})) &= 1, \quad i^{2,2}(H^3(Y;\mathbb{Q}))  = ph(Y,w) - 3, \\ i^{2,1}(H^3(Y;\mathbb{Q})) &= i^{1,2}(H^3(Y;\mathbb{Q}))  = h^{1,2}(Z).
\end{align*}
Now we may compute the Hodge--Deligne numbers of $H^3(Y,V)$. We have the long exact sequence
$$
\dots \longrightarrow H^2(Y;\mathbb{Q}) \longrightarrow H^2(V;\mathbb{Q}) \longrightarrow H^3(Y,V;\mathbb{Q}) \longrightarrow H^3(Y;\mathbb{Q}) \longrightarrow 0.
$$
By Proposition \ref{prop:lowhn}, $H^2(Y;\mathbb{Q})$ is purely of type $(1,1)$, so the nonzero Hodge numbers of the cokernel of $H^2(Y;\mathbb{Q})\rightarrow H^2(V;\mathbb{Q})$ (which we will call $PH$) are
$$
i^{2,0}(PH) = i^{0,2}(PH) = 1, \qquad i^{1,1}(PH) = ph(Y,w)-2.
$$
This uses the fact that the image of the restriction map of $H^2(Y;\mathbb{Q})$ to $H^2(V;\mathbb{Q})$ is the same as that of the restriction map from $H^2(Z;\mathbb{Q})$ to $H^2(V;\mathbb{Q})$ by the global invariant cycles theorem (Theorem \ref{thm:gict}). Since Hodge numbers are additive in short exact sequences of mixed Hodge structures, it follows that the nonzero Hodge numbers of $H^3(Y,V)$ are
\begin{align*}
i^{3,3}(H^3(Y,V&;\mathbb{Q})) = 1, \, i^{2,2}(H^3(Y,V;\mathbb{Q})) = ph(Y,w) - 3, \, i^{1,1}(H^3(Y,V;\mathbb{Q})) = ph-2, \\
 i^{2,1}(H^3(Y,V;\mathbb{Q})) = &\,i^{1,2}(H^3(Y,V;\mathbb{Q})) = h^{1,2}(Z), \, i^{2,0}(H^3(Y,V;\mathbb{Q})) = i^{0,2}(H^3(Y,V)) = 1.
\end{align*}
Therefore, the fact that $\dim \mathrm{Gr}_F^pH^3(Y,V;\mathbb{C}) = \sum_{q} i^{p,q}(H^3(Y,V;\mathbb{Q}))$ (as noted in Remark \ref{remk:additive}) implies that
\[
\dim \mathrm{Gr}_F^0H^3(Y,V;\mathbb{C}) = \dim\Gr_F^3H^3(Y,V;\mathbb{C})= 1
\]
and
\[
\dim\Gr_F^1H^3(Y,V;\mathbb{C}) =\dim \Gr_F^2H^3(Y,V;\mathbb{C}) = ph(Y,w) - 2 + h^{1,2}(Z).
\]
This finishes the proof of the proposition when combined with Theorem \ref{thm:filt}.
\end{proof}
Theorem \ref{thm:3fold} is then a combination of Proposition \ref{prop:vanish1}, Corollary \ref{cor:vanish2}, Corollary \ref{cor:vanish3}, and Proposition \ref{prop:hodgy}.

\section{Mirror symmetry for weak toric Fano threefolds}\label{sect:toric}

In the next section, we will establish Conjecture \ref{conj:KKP} for crepant resolutions of Gorenstein toric Fano varieties and their LG mirrors.

\subsection{Toric weak Fano threefolds}\label{sect:constor}
Now let us discuss mirror symmetry for smooth toric threefolds. Our notation is similar to that of Batyrev \cite{bat} and Batyrev-Borisov \cite{bat-bo}, except for the fact that we have inverted $\Delta$ and $\Delta^\circ$. The reader is referred to \cite{bat} and the book of Cox, Little, and Schenck \cite{cls} for more details. We will assume that the reader is familiar with basic notions of fans and polytopes.

Let us review some basic facts regarding toric varieties. Following standard notation, $M$ will always denote a lattice (a free abelian group of finite rank), and $N = \mathrm{Hom}(M,\mathbb{Z})$ will denote its dual. For any fan $\Sigma$ in $M\otimes \mathbb{R}$, we will let $\Sigma[n]$ denote the disjoint union of all cones of dimension $n$. The rays of $\Sigma$ are elements of the set $\Sigma[1]$.

\begin{defn}
Let $\Delta \subseteq M \otimes \mathbb{R}$ be a bounded polytope. We say $\Delta$ is {\em integral} if all of its vertices are at points in $M \subseteq M \otimes \mathbb{R}$. Then
\[
\Delta^\circ = \{ \rho \in \mathrm{Hom}(M,\mathbb{R})\cong N \otimes \mathbb{R} : \rho(x) \leq -1 \, \, \forall \, \, x \in \Delta \}.
\] 
is called the {\em polar dual} of $\Delta$. We say $\Delta$ is {\em reflexive} if both $\Delta$ and $\Delta^\circ$ are integral and the origin is in the interior of $\Delta$. If $F$ is a face of $\Delta$ of dimension $i$, then there is a dual face $F^\circ$ of $\Delta^\circ$ of dimension $\rank M - 1 -i$ defined to be 
\[
\{\rho \in \Delta^\circ, \rho(x) = -1 : x \in F \}.
\]
\end{defn}

Henceforward, $\Delta$ will denote a reflexive polytope embedded in $M \otimes \mathbb{R}$ for some lattice $M$ of rank 3. Let $\Sigma(\Delta)$ be the fan over the faces of $\Delta$. The cones of $\Sigma(\Delta)$ are in bijection with the faces of $\Delta$. 

We may choose a refinement of $\Sigma(\Delta)$, which we call $\hat{\Sigma}(\Delta)$, so that each 3-dimensional cone of $\hat{\Sigma}$ is spanned by rays which generate the lattice $M$, and that the rays of $\hat{\Sigma}(\Delta)$ are generated by the integral points in the boundary of $\Delta$, which we will denote $\partial \Delta$. By abuse of notation, we may identify rays $\rho$ with their generators in $\partial \Delta \cap M$. Such a refinement exists, according to Batyrev \cite{bat}, but is not necessarily unique. Such a refinement induces a triangulation of the boundary $\partial \Delta$ of $\Delta$ whose vertex sets contains all integral points in $\partial \Delta$. The results of this section are independent of our choice of $\hat{\Sigma}(\Delta)$.

Let $\hat{X}_\Delta$ be the toric variety associated to the fan $\hat{\Sigma}(\Delta)$, then $\hat{X}_\Delta$ is smooth, projective, and weak Fano.  To each ray $\rho$ of $\hat{\Sigma}(\Delta)[1]$, there is a divisor $D_\rho$ of $\hat{X}_\Delta$. A standard result in toric geometry (see, e.g. \cite[Proposition 4.4.1]{bat}) says that the divisors $D_\rho$ generate $\mathrm{Pic}(\hat{X}_\Delta)$ and have $\rank M$ relations. More precisely, there is a short exact sequence of groups,
\begin{equation}\label{eq:picses}
0 \longrightarrow \mathrm{Hom}_\mathbb{Z}(M,\mathbb{Z}) \longrightarrow \mathbb{Z}^{\partial \Delta\cap M} \longrightarrow \mathrm{Pic}(\hat{X}_\Delta) \longrightarrow 0
\end{equation}
where $n \in N = \Hom_\mathbb{Z}(M,\mathbb{Z})$ is mapped to $\sum_{\rho \in \partial\Delta\cap M} n(\rho)[\rho]$. Therefore,
$$
\rank \Pic(\hat{X}_\Delta) = h^{1,1}(\hat{X}_\Delta) = \ell(\Delta) - 4.
$$
where $\ell(\Delta)$ is the number of lattice points in $\Delta$. Note that $\ell(\Delta)-1$ the number of 1-dimensional strata of $\hat{\Sigma}(\Delta)$, since for any reflexive polytope, the only interior point is the origin.

Any toric variety can be algebraically stratified into a number of copies of $(\mathbb{C}^\times)^k$ so $h^{p,q}(\hat{X}_\Delta) = 0$ if $p \neq q$. Therefore $h^{2,1}(\hat{X}_\Delta) = 0$ and we can compute the Hodge numbers of $\hat{X}_\Delta$.
\begin{proposition}\label{prop:Xdhn}
The Hodge numbers of $\hat{X}_\Delta$ are 
\[
h^{p,q}(\hat{X}_\Delta) = \begin{cases}
1 & \mathrm{if } \quad (p,q) = (0,0), (3,3) \\
\ell(\Delta) - 4 & \mathrm{if }\quad (p,q) = (1,1), (2,2) \\
0 & \mathrm{otherwise.}
\end{cases}
\]
\end{proposition}
Applying the functor $\mathrm{Hom}_\mathbb{Z}(-,\mathbb{C}^\times)$ to (\ref{eq:picses}), we get a short exact sequence of tori,
\[
0 \longrightarrow G_{\hat{\Sigma}(\Delta)} =\mathrm{Hom}_\mathbb{Z}(\mathrm{Pic}(\hat{X}_\Delta),\mathbb{C}^\times) \longrightarrow (\mathbb{C}^\times)^{\partial \Delta \cap M} \longrightarrow T_M=M \otimes \mathbb{C}^\times \longrightarrow 0.
\]
One may partially compactify $(\mathbb{C}^\times)^{\partial \Delta \cap M}$ to a quasi affine variety $V_{\hat{\Sigma}(\Delta)} \subseteq \mathbb{C}^{\partial \Delta \cap M}$ (which depends on $\hat{\Sigma}(\Delta)$) so that $\hat{X}_{\Delta} = V_{\hat{\Sigma}(\Delta) }/\mkern-6mu/ G_{\hat{\Sigma}(\Delta)}$. This equips $\hat{X}_\Delta$ with a homogeneous coordinate ring $\mathbb{C}[\{x_\rho\}_{\rho \in \partial \Delta \cap M}]$ which is graded by $\mathrm{Pic}(\hat{X}_\Delta)$ \cite[Theorem 5.1.11]{cls}. The global sections of the anticanonical bundle, $\omega^{-1}_{\hat{X}_\Delta}$, may be identified with homogeneous polynomials
\[
\sum_{\alpha \in \Delta^\circ \cap N} a_\alpha \left(\prod_{\rho \in \partial \Delta \cap M} x_\rho^{\alpha(\rho) + 1}\right)
\]
where $a_\alpha$ are complex numbers. In particular, since the origin is contained in $\Delta^\circ \cap N$, $s_\infty = \prod_{\rho \in \partial \Delta \cap M} x_\rho$, whose vanishing locus is $\cup_{\rho \in \partial \Delta \cap M} D_\rho$, is a global section of $\omega^{-1}_{\hat{X}_\Delta}$. The divisors $D_\rho$ are called the {\em toric boundary divisors} of $\hat{X}_\Delta$. In terms of homogeneous coordinates, $D_\rho$ is the vanishing locus of $x_\rho$.

\subsection{Mirrors of toric weak Fano threefolds}\label{sect:conslg}

We will now construct a family of LG models for each $\Delta$, which we expect to be mirror to $\hat{X}_\Delta$. The mirror of a toric variety $\hat{X}_\Delta$ is generally expected to be the LG model $(\mathbb{C}^{\times \dim \Delta}, w)$ where $w$ is some Laurent polynomial whose Newton polytope is $\Delta$.  We will call this LG model {\em Givental's LG model}. A Laurent polynomial produces a map which is very far from being proper, so we cannot apply Theorem \ref{thm:3fold} to calculate the KKP Hodge numbers of Givental's Landau-Ginzburg model. The procedure described in the next paragraph constructs a partial compactification of $(\mathbb{C}^{\times \dim \Delta},w)$ which admits a type III compactification.
\begin{remark}
As described in Section \ref{ex:three}, mirror symmetry for a Fano manifold $X$ implicitly requires one to choose an anticanonical divisor $W$ in $X$. As we have already mentioned, if $W$ is smooth, then $w : Y \rightarrow \mathbb{C}$ is expected to be proper. Givental's LG mirror to a toric variety $\hat{X}_\Delta$ is the mirror obtained by taking $W$ to be the union over all toric boundary divisors in $\hat{X}_\Delta$. Smoothing the divisor $W$ should be though of as inserting horizontal divisors into $Y$. In effect, this is what the process described below should be doing.
\end{remark}


In this paragraph, we define families of LG models which we expect to be mirror to $\hat{X}_{\Delta}$. The reader is referred to \cite{bat,bat-bo} for the facts in this paragraph. The polytope $\Delta^\circ$ is contained in $N_\mathbb{R} := N \otimes \mathbb{R}$ and is integral with respect to $N$. We may associate to $\Delta^\circ$ the toric variety $\hat{X}_{\Delta^\circ}$ in the same way we constructed $\hat{X}_\Delta$ above. We will let $s_\infty = \prod_{\rho \in \partial\Delta^\circ\cap N}x_\rho$, and we may choose $s$ to be a generic global section of $\omega_{\hat{X}_{\Delta^\circ}}^{-1}$. We may then produce a pencil $\mathscr{P}(r,t)$ of anticanonical hypersurfaces in $\hat{X}_{\Delta^\circ}$ written as $\{rs - ts_\infty = 0\}$ over $\mathbb{P}^1_{r,t}$. The base locus of this pencil is just the intersection of $V = \{s = 0\}$ with the union of all toric boundary divisors $D_\rho$ in $\hat{X}_{\Delta^\circ}$. By the assumption that $\Delta$ is reflexive, the line bundle $\omega_{\hat{X}_\Delta}^{-1}$ is semiample \cite[Proposition 2.4]{bat-bo}. Therefore, since $V$ is generic, it follows that for each $\rho$, $V \cap D_\rho$ is a smooth, reduced curve and the union of all curves $V \cap D_\rho$ is normal crossings. We may sequentially blow up the curves in the base locus of $\mathscr{P}(r,t)$ to resolve indeterminacy. The result is a smooth variety $Z_\Delta$ which is fibered over $\mathbb{P}^1_{r,t}$. Call this map $f_\Delta$. Let $Y_\Delta := Z_\Delta \setminus f_\Delta^{-1}(\infty)$ and let $w_\Delta := f_\Delta|_{Y_\Delta}$.

\begin{proposition}\label{prop:isagoodcomp}
The pair $(Y_\Delta,w_\Delta)$ is a proper LG model and $(Z_\Delta,f_\Delta)$ is a type III compactification of $(Y_{\Delta},w_{\Delta})$.
\end{proposition}
\begin{proof}
Recall that if we have a smooth blow-up $\pi : \widetilde{X} \rightarrow X$ along a codimension 2 subvariety $C$ in $X$, and $E = \pi^{-1}(C)$, then $-K_{\widetilde{X}} = -\pi^*K_X - E$. Thus if $E_{C_1},\dots, E_{C_k}$ are the exceptional divisors of the sequential blow up map $\pi:Z_{\Delta} \rightarrow \hat{X}_{\Delta^\circ}$, it follows that
$$
-K_{Z_{\Delta}} = -\pi^* K_{\hat{X}_{\Delta^\circ}} - \sum_{i=1}^k E_{C_i}.
$$
which is just the class of any fiber of $f_\Delta$. The fiber of $f_\Delta$ over $\infty$ is the union of the proper transforms of $D_\rho \in \hat{X}_{\Delta^\circ}$. The refinement $\hat{\Sigma}(\Delta^\circ)$ of $\Sigma(\Delta^\circ)$ produces triangulation of $\partial \Delta^\circ$ whose 0-dimensional strata are $\partial \Delta^\circ \cap N$. The divisors $D_\rho \in \hat{X}_{\Delta^\circ}$ correspond to the vertices in this triangulation, the edges between two vertices $\rho_1$ and $\rho_2$ correspond to rational curves of intersection between $D_{\rho_1}$ and $D_{\rho_2}$, and the triangles with vertices $\rho_1,\rho_2$ and $\rho_3$ correspond to triple intersection points between $D_{\rho_1},D_{\rho_2}$ and $D_{\rho_3}$. Therefore, this triangulation of $\partial \Delta^\circ$ is the dual intersection complex of $\cup_{\rho \in \partial \Delta^\circ \cap N}D_\rho$, which is a sphere, since $\Delta^\circ$ is a convex polytope in $\mathbb{R}^3$. Blowing up at curves which meet the singularities of $\cup_{\rho \in \partial \Delta^\circ \cap N}D_\rho$ transversally does not affect this dual intersection complex. Since $K_{Z_\Delta}$ is $-F$ for $F$ the class of any fiber of $f_\Delta$, if we take $\mathscr{U}$ to be the preimage of a disc around infinity under $f_\Delta$, $K_{\mathscr{U}}$ is trivial. Hence the fiber at infinity of $f : Z_\Delta \rightarrow \mathbb{P}^1$ is a type III degeneration of K3 surfaces.

The threefold $\hat{X}_{\Delta^\circ}$ satisfies $h^{p,0}(\hat{X}_{\Delta^\circ}) = 0$ for $p \neq 0$, and blowing up at curves does not affect $h^{p,0}(\hat{X}_{\Delta^\circ})$, as these Hodge numbers are birational invariants. Therefore, $h^{p,0}(Z_\Delta) = 0$ for $ p \neq 0$. We may then conclude that $(Z_\Delta,f_\Delta)$ is a type III compactification of $(Y_\Delta,w_\Delta)$.
\end{proof}

\subsection{The KKP Hodge numbers of $(Y_\Delta,w_\Delta)$} 

As a consequence of Proposition \ref{prop:isagoodcomp}, we can apply Theorem \ref{thm:3fold} to compute the KKP Hodge numbers of the proper LG model $(Y_\Delta ,w_\Delta)$.  First we will establish some useful notation.

\begin{defn}
We let $\Delta[i]$ denote the collection of all faces of $\Delta$ of dimension $i$. If $F$ is a face of $\Delta$, we let $\ell(F)$ denote the number of integral points in $F$ and let $\ell^*(F)$ denote the number of integral points on the interior of $F$.
\end{defn}
Now we may compute the Hodge numbers of $Z_\Delta$.
\begin{proposition}\label{prop:torichn}
Let $Z_\Delta$ be as above. Then $h^{0,0}(Z_\Delta) = h^{3,3}(Z_\Delta) = 1$,
\begin{align*}
h^{1,2}(Z_\Delta) = h^{2,1}(Z_{\Delta}) & = \sum_{F \in \Delta[2]} \ell^*(F) \\
h^{1,1}(Z_\Delta) = h^{2,2}(Z_\Delta) & = 2\ell(\Delta^\circ) - 5 - \sum_{F \in \Delta^\circ[2]} \ell^*(F) + \sum_{F \in \Delta^\circ[1]} \ell^*(F) \ell^*(F^\circ)
\end{align*}
and $h^{i,j}(Z_\Delta) = 0$ for all other $i,j$.
\end{proposition}
\begin{proof}
On the big torus $(\mathbb{C}^\times)^3_{x,y,z}$ of $\hat{X}_{\Delta^\circ}$, there is a Laurent polynomial $f(x,y,z)$ which determines $V$ and so that the Newton polytope of $f(x,y,z)$ is $\Delta$. The restriction of $V$ to the big torus $(\mathbb{C}^\times)^2$ in any $D_v$ has Newton polytope is computed in the following way. Let $\rho \in \Delta^\circ$, and let $\Gamma(\rho)$ be the smallest face of $\Delta^\circ$ containing $\rho$ (here, $\rho$ need not be in $N$). The face $\Gamma(\rho)$ has a dual face $\Gamma(\rho)^\circ$ in $\Delta$ defined to be
$$
\Gamma(\rho)^\circ =  \{ \sigma \in \Delta : \langle \rho,\sigma \rangle = -1 \}.
$$
These faces satisfy $\dim \Gamma(\rho) + \dim \Gamma(\rho)^\circ = 2$. The restriction of $V$ to the big torus $(\mathbb{C}^\times)^2 \subseteq D_\rho$ has Newton polytope $\Gamma(\rho)^\circ$. Thus
\begin{enumerate}
\item If $\dim \Gamma(\rho)^\circ = 2$, then $D_\rho \cap V = \emptyset$,
\item If $\dim \Gamma(\rho)^\circ = 1$ then $D_\rho \cap V$ is a union of $1+\ell^*(\Gamma(\rho)^\circ)$ smooth rational curves.
\item If $\dim \Gamma(\rho)^\circ = 0$ then $D_\rho \cap V$ is a single smooth curve whose genus is $\ell^*(\Gamma(\rho))^\circ$ (this follows by \cite{dan-ko}).
\end{enumerate}
These statements follow from \cite[Theorem 2.5]{bat-bo} or an easy computation. Now, recall that if we let $\widetilde{X}$ be the blow up of a threefold $X$ in a smooth irreducible curve of genus $g$ then
\begin{align*}
h^{2,2}(\widetilde{X}) =h^{1,1}(\widetilde{X}) &=h^{1,1}(X) + 1\\
h^{2,1}(\widetilde{X}) =h^{1,2}(\widetilde{X}) & =h^{1,2}(X) + g
\end{align*}
see e.g. \cite[\S 7.3.3]{vois2}. Therefore $h^{2,1}(Z_\Delta)$ is the sum of the genera of the curves which were blown up. For each facet $F$ of $\Delta$, the dual face $F^\circ$ in $\Delta^\circ$ is a single vertex. Therefore, the sum of genera of the blown up curves is simply the number of points on the interior of facets of $\Delta$. This gives
\begin{align*}
h^{2,1}(Z_\Delta) &= \sum_{F \in \Delta[2]} \ell^*(F).
\end{align*}
To show that
\begin{equation}\label{eq:3}
h^{1,1}(Z_\Delta) = 2\ell(\Delta^\circ) - 5 - \sum_{F \in \Delta^\circ[2]} \ell^*(F) + \sum_{F \in \Delta^\circ[1]} \ell^*(F) \ell^*(F^\circ)
\end{equation}
first note that $h^{1,1}(\hat{X}_{\Delta^\circ}) = \ell(\Delta^\circ) -4$, then we count the number of times we have blown up $\hat{X}_{\Delta^\circ}$ to get $Z_\Delta$ (which is the number of irreducible curves in the base locus of the pencil $\mathscr{P}(r,t)$) and add the resulting numbers. For each integral point on the interior of an edge of $\Delta^\circ$, we add $\ell^*(F^\circ)$. This contributes a $\sum_{F\in \Delta^\circ[1]}\ell^*(F)(\ell^*(F^\circ)+1)$ term. Then we add 1 for every vertex of $\Delta^\circ$. These two terms add up to
\begin{equation}\label{eq:h11}
\sum_{F\in \Delta^\circ[1]}\ell^*(F)(\ell^*(F^\circ)+1) + \sum_{F \in \Delta^\circ[0]} \ell(F) = \ell(\Delta^\circ) - 1 - \sum_{F \in \Delta^\circ[2]}\ell^*(F) + \sum_{F\in \Delta^\circ[1]}\ell^*(F) \ell^*(F^\circ).
\end{equation}
At this point, (\ref{eq:3}) can be deduced by adding the right hand side of (\ref{eq:h11}) to $\ell(\Delta^\circ) -4$.
\end{proof}

We will now compute the KKP Hodge numbers of $(Y_\Delta,w_\Delta)$. First, we compute the dimension of primitive cohomology of a fiber $V$ of $w_\Delta$. A generic member $V$ of the pencil $\mathscr{P}(r,t)$ is biholomorphic to an anticanonical hypersurface $\{s=0\}$ in $\hat{X}_{\Delta^\circ}$, so we will use the notation $V$ to denote both. The following result is a modification of \cite[Proposition 4.4.2]{bat}. 
\begin{proposition}\label{prop:bat}
If $\Delta^\circ$ is a reflexive polytope of dimension 3 and $V$ is a generic anticanonical hypersurface in $\hat{X}_{\Delta^\circ}$, then the sublattice of $\Pic(V)$ spanned by the union of all irreducible curves in $D_v \cap V$ as $v$ runs over all elements of $\partial \Delta^\circ \cap N$ is of rank
$$
\ell(\Delta^\circ)-4 - \sum_{F \in \Delta^\circ[2]} \ell^*(F) + \sum_{F \in \Delta^\circ[1]} \ell^*(F) \ell^*(F^\circ).
$$
\end{proposition}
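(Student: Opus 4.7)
The plan is to follow the structure of Batyrev's proof of \cite[Proposition 4.4.2]{bat}, adapted to dimension three (which is the case Batyrev explicitly excluded).

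First I would identify the sublattice generated by the curves $D_v \cap S^\vee$ with the image of the restriction map $H^2(Z_\Delta, \mathbb{C}) \to H^2(S^\vee, \mathbb{C})$. Since $h^{i,0}(Z_\Delta) = 0$ for $i \neq 0$, the cohomology $H^2(Z_\Delta, \mathbb{C})$ is purely of Hodge type $(1,1)$ and hence generated by algebraic divisor classes: the pullbacks of the toric divisors $D_v$ of $X_{\Delta^\circ}$ and the exceptional divisors $E_C$ of the blow-ups $\pi : Z_\Delta \to X_{\Delta^\circ}$ along the base curves of the pencil $\mathscr{P}(r,t)$. The pullback $\pi^* D_v$ restricts to $\sum_i C_i^{(v)}$ on the strict transform of $S^\vee$, where the $C_i^{(v)}$ are the irreducible components of $D_v \cap S^\vee$, while each $E_C$ restricts to $-C$ (as $E_C$ is a $\mathbb{P}^1$-bundle over $C$ and $C$ sits inside $S^\vee$ as an exceptional curve). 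Thus the image is exactly the sublattice of $\Pic(S^\vee)$ generated by all of the $C_i^{(v)}$.

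Next I would compute the rank of this sublattice in two stages. Stage one is to identify the image of $\Pic(X_{\Delta^\circ}) \to \Pic(S^\vee)$. The rays $v$ for which $D_v|_{S^\vee} = 0$ are precisely the ones in the interior of a 2-face of $\Delta^\circ$, since these are exactly the $v$ with $\dim \Gamma(v)^\circ = 2$. For generic $S^\vee$ these classes remain independent in $\Pic(X_{\Delta^\circ})$ modulo the three linear equivalence relations coming from $M$, so the image has rank
\[
\ell(\Delta^\circ) - 4 - \sum_{F \in \Delta^\circ[2]} \ell^*(F).
\]
Stage two accounts for additional classes coming from reducibility: for each lattice point $v$ on the interior of an edge $F \in \Delta^\circ[1]$, $D_v \cap S^\vee$ splits as $1 + \ell^*(F^\circ)$ smooth rational curves whose sum is $D_v|_{S^\vee}$, contributing $\ell^*(F^\circ)$ new classes beyond the image of $\Pic(X_{\Delta^\circ})$. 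Summing over all such $v$ adds the correction $\sum_{F \in \Delta^\circ[1]} \ell^*(F)\ell^*(F^\circ)$ and yields the claimed formula.

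The main obstacle is justifying the two linear independence assertions: that the $D_v$ for $v$ in the interior of a 2-face remain independent in $\Pic(X_{\Delta^\circ})$ once one has quotiented by the $M$-relations, and more substantially that the $1 + \ell^*(F^\circ)$ rational components of $D_v \cap S^\vee$ generate a sublattice of the expected rank. The first is standard combinatorics of the dual fan; the second is the technical core of Batyrev's argument, and proceeds by varying $S^\vee$ in its ample linear system and using the monodromy action that permutes the rational components within each reducible $D_v \cap S^\vee$ transitively to force independence. This genericity argument goes through verbatim in dimension three, so the proposition follows without essential modification from Batyrev's proof.
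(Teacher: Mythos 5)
Your proposal is correct and follows essentially the same route as the paper, which itself offers no independent proof but simply observes that Batyrev's argument for \cite[Proposition 4.4.2]{bat} goes through unchanged when $\dim\Delta^\circ=3$. Your reconstruction of the two stages (image of $\Pic(X_{\Delta^\circ})$, then the correction from the reducible curves $D_v\cap S^\vee$ for $v$ interior to an edge) is exactly the content of that adaptation, so nothing further is needed.
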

\begin{proof}
Let $\partial \hat{X}_{\Delta^\circ}$ denote $ \cup_{\rho \in \partial \Delta^\circ \cap N} D_\rho$, and let $\partial V = \partial \hat{X}_{\Delta^\circ} \cap V$. By the discussion above, $\partial V$ is a normal crossings union of the irreducible components of the curves $C_v = D_v \cap V$ as $v$ runs over all elements of $\partial \Delta^\circ \cap N$. We have a long exact sequence of cohomology groups,
\[
\dots \longrightarrow H^2(V) \longrightarrow H^2(\partial V) \longrightarrow H^3_c(V \setminus \partial V) \longrightarrow 0
\]
coming from the long exact sequence for compactly supported cohomology. By the toric Lefschetz hyperplane theorem \cite[Theorem 3.7]{dk}, we have that $H^3(V \setminus \partial V) \cong \mathbb{C}^3$, therefore the image of $H^2(V) \rightarrow H^2(\partial V)$ is of codimension 3. The map $H^2(V) \rightarrow H^2(\partial V)$ is pullback in cohomology. 

We may compose this with the pullback along the normalization map $n : \widetilde{\partial V} \rightarrow \partial V$ to get a map $H^2(V) \rightarrow H^2(\widetilde{\partial V})$, which also has image of codimension 3 by Corollary \ref{cor:ncomp}. Identify $\widetilde{\partial V}$ with $\sqcup C_i$ for $C_i$ its irreducible components. Then the map $H^2(V) \rightarrow H^2(\sqcup C_i) \cong \bigoplus_i H^2(C_i)$ is dual to the Gysin map
\[
\bigoplus_i H^0(C_i) \longrightarrow H^2(V)
\]
which has kernel of rank 3 and image the span of the classes of the curves $C_i$. Therefore, we have that the curves $C_i$ span a subspace of $H^2(V)$ of rank $3$ minus the number of curves $C_i$. This number was computed in the proof of Proposition \ref{prop:torichn} to be 
\[
\ell(\Delta^\circ) - 1 - \sum_{F \in \Delta^\circ[2]} \ell^*(F) + \sum_{F \in \Delta^\circ[1]} \ell^*(F) \ell^*(F^\circ).
\]
This proves the result.
\end{proof}
\begin{proposition}\label{prop:primitive}
 Let $ph(Y_\Delta,w_\Delta)$ be as in Definition \ref{def:ph}. Then
$$
ph(Y_\Delta,w_\Delta) = 26- \ell(\Delta^\circ) + \sum_{F \in \Delta^\circ[2]} \ell^*(F) - \sum_{F \in \Delta^\circ[1]} \ell^*(F) \ell^*(F^\circ).
$$
\end{proposition}
\begin{proof}
First, let $\pi : Z_\Delta \rightarrow \hat{X}_{\Delta^\circ}$ be the contraction map. We note that $H^2(Z_\Delta)$ is spanned by the pullbacks of a collection of generators of $H^2(\hat{X}_{\Delta^\circ})$ (which we may take to be the first Chern classes of the divisors $\{ D_\rho \}_{\rho \in \partial \Delta^\circ \cap N}$) and the exceptional divisors of $\pi$, which we will denote $\{E_{C_i}\}_{i=1}^h$, where $C_i$ is the curve $\pi(E_{C_i})$ in $V \cap \cup_{\rho \in \partial \Delta^\circ \cap N}$. Since the proper transforms of $D_\rho$ is contained in $f_\Delta^{-1}(\infty)$, the pullback of $D_\rho$ to a smooth fiber of $f_\Delta$ is 0. The smooth fiber of $f_\Delta$ over $0$ is the proper transform of $V$, so we will also use the notation $V$ to denote a smooth fiber of $f_\Delta$. The intersection of $E_{C_i}$ with $V$ is then identified with the curve in the base locus that $E_j$ contracts to. Therefore, the pullback of $E_{C_i}$ to $V$ is the class of $C_i$ in $H^2(V)$. Thus
\[
\mathrm{im}(H^2(Z_\Delta) \longrightarrow H^2(V))
\]
is the subspace spanned by union of all curves $D_\rho \cap V$, where $v$ runs over $\partial \Delta^\circ \cap N$. By Proposition \ref{prop:bat}, this has dimension
$$
\ell(\Delta^\circ)-4 - \sum_{F \in \Delta^\circ[2]} \ell^*(F) + \sum_{F \in \Delta^\circ[1]} \ell^*(F) \ell^*(F^\circ).
$$
According to the global invariant cycles theorem, the image of $H^2(Y_\Delta) \rightarrow H^2(V)$ is the same as the image of $H^2(Z_\Delta) \rightarrow H^2(V)$. This proves the proposition when we combine it with the fact that $H^2(V)$ has dimension 22.
\end{proof}

\begin{proposition}\label{lemma:rohsiepe}
\begin{align*}
f^{1,2}(Y_\Delta,w_\Delta) &= f^{2,1}(Y_\Delta,w_\Delta) \\ &= 24-\ell(\Delta^\circ) +\sum_{F \in \Delta^\circ[2]} \ell^*(F) - \sum_{F \in \Delta^\circ[1]} \ell^*(F) \ell^*(F^\circ) +  \sum_{F \in \Delta[2]} \ell^*(F).
\end{align*}
\end{proposition}
\begin{proof}
Since $(Y_\Delta,w_\Delta)$ admits a type III compactification $(Z_\Delta,f_\Delta)$, we may apply Theorem \ref{thm:3fold} to see that
\begin{equation}\label{recall}
f^{1,2}(Y_\Delta,w_\Delta) = f^{2,1}(Y_\Delta,w_\Delta) = ph(Y_\Delta,w_\Delta)-2 + h^{2,1}(Z_\Delta).
\end{equation}
Combining Propositions \ref{prop:torichn} and \ref{prop:primitive} with (\ref{recall}) we obtain the proposition.
\end{proof}

Now, since $(Y_\Delta,w_\Delta)$ admits a type III compactification $(Z_\Delta,f_\Delta)$, Theorem \ref{thm:3fold} implies that the only KKP Hodge numbers that we do not yet know are $f^{1,1}(Y_\Delta,w_\Delta)$ and $f^{2,2}(Y_\Delta,w_\Delta)$.
\begin{proposition}\label{prop:f11}
$$
f^{1,1}(Y_\Delta,w_\Delta) = f^{2,2}(Y_\Delta,w_\Delta) = 0.
$$
\end{proposition}
\begin{proof}
By Theorem \ref{thm:3fold}, it is enough to show that $h^2(Y_\Delta,V) = 0$. Our first goal will be to show that 
\begin{equation}\label{eq:H4}
\dim H^2(Y_\Delta) = \dim H_c^4(Y_\Delta) = \ell(\Delta^\circ)-4 - \sum_{F \in \Delta^\circ[2]} \ell^*(F) + \sum_{F \in \Delta^\circ[1]} \ell^*(F) \ell^*(F^\circ).
\end{equation}
By Proposition \ref{prop:lowhn}, we know that $H^1(Y_\Delta, V) = 0$. We know that $H^5(Y_\Delta,V)$ sits in the long exact sequence in cohomology,
\[
\dots \longrightarrow H^1(Y,V) \longrightarrow H^1(Y_\Delta) \longrightarrow H^1(V) \longrightarrow \dots
\]
hence $H^1(Y_\Delta) = 0$, therefore $H^5_c(Y_\Delta) = 0$. We have a long exact sequence,
\[
\dots \longrightarrow H^3(V) \cong 0 \longrightarrow H^4_c(Y_\Delta) \longrightarrow H^4(Z_\Delta) \longrightarrow H^4(D_\infty) \longrightarrow H^5_c(Y_\Delta) \cong 0 \longrightarrow \dots.
\] 
We know from Proposition \ref{prop:torichn} that $H^4(Z_\Delta)$ has dimension 
\begin{equation}\label{eq:H4z}
2\ell(\Delta^\circ) - 5 - \sum_{F \in \Delta^\circ[2]} \ell^*(F) + \sum_{F \in \Delta^\circ[1]} \ell^*(F) \ell^*(F^\circ).
\end{equation}
The fiber $D_\infty$ is normal crossings and has $\ell(\Delta) - 1$ components, therefore by Corollary \ref{cor:ncomp}, $h^4(D_\infty) = \ell(\Delta) - 1$. Thus (\ref{eq:H4}) follows from (\ref{eq:H4z}) and Poincar\'e duality. We must now compute $H^2(Y_\Delta,V)$. We have the long exact sequence,
\begin{equation}\label{eq:relt}
\dots \rightarrow H^1(V) \cong 0 \rightarrow H^2(Y_\Delta,V) \rightarrow H^2(Y_\Delta) \rightarrow H^2(V) \rightarrow \dots.
\end{equation}
We showed in Proposition \ref{prop:primitive} that $H^2(Y_\Delta) \rightarrow H^2(V)$ has image of dimension
\begin{equation}\label{eq:l3}
\ell(\Delta^\circ)-4 - \sum_{F \in \Delta^\circ[2]} \ell^*(F) + \sum_{F \in \Delta^\circ[1]} \ell^*(F) \ell^*(F^\circ).
\end{equation}
Therefore $\dim H^2(Y_\Delta,V)=0$ by combining (\ref{eq:l3}), (\ref{eq:relt}) and (\ref{eq:H4}).
\end{proof}

\subsection{Topological mirror symmetry} 

In this subsection, we will show that $\hat{X}_\Delta$ and $(Y_\Delta,w_\Delta)$ are topologically mirror dual. We have now computed both $f^{p,q}(Y_\Delta,w_\Delta)$ and $h^{p,q}(\hat{X}_\Delta)$. We know that 
\[
h^{0,0}(\hat{X}_\Delta) = h^{3,3}(\hat{X}_\Delta) = f^{3,0}(Y_\Delta,w_\Delta) = f^{0,3}(Y_\Delta,w_\Delta) = 1
\]
by Theorem \ref{thm:3fold}. We know that if $p+q \neq 3$, then $f^{p,q}(Y_\Delta,w_\Delta) = 0$ by Theorem \ref{thm:3fold} and Proposition \ref{prop:f11}. By Proposition \ref{prop:Xdhn}, we have that
\[
h^{1,1}(\hat{X}_\Delta) = h^{2,2}(\hat{X}_\Delta) = \ell(\Delta) - 4,
\]
and by Proposition \ref{lemma:rohsiepe}, we have that
\begin{align*}
f^{1,2}(Y_\Delta,w_\Delta) &= f^{2,1}(Y_\Delta,w_\Delta) \\ &= 24-\ell(\Delta^\circ) +\sum_{F \in \Delta^\circ[2]} \ell^*(F) - \sum_{F \in \Delta^\circ[1]} \ell^*(F) \ell^*(F^\circ) +  \sum_{F \in \Delta[2]} \ell^*(F).
\end{align*}
Therefore, in order to prove that Conjecture \ref{conj:KKP} holds for the pair $\hat{X}_\Delta$ and $(Y_\Delta, w_\Delta)$ we must show that
\begin{equation}\label{bigcobid}
\ell(\Delta) - 4 = 24-\ell(\Delta^\circ) +\sum_{F \in \Delta^\circ[2]} \ell^*(F) - \sum_{F \in \Delta^\circ[1]} \ell^*(F) \ell^*(F^\circ) +  \sum_{F \in \Delta[2]} \ell^*(F).
\end{equation}
This turns out to be a combinatorial fact about reflexive 3 dimensional polytopes and their polar duals. We were unable to find (\ref{bigcobid}) in the literature, however, it can be deduced from the following fact.
\begin{lemma}[{\cite[Theorem 5.1.16]{nillhaase}}]\label{combid}
If $\Delta$ is a reflexive polytope in dimension 3 then
\begin{equation}\label{cobideq}
24 = \sum_{F \in \Delta[1]} (\ell(F)-1) (\ell(F^\circ)-1).
\end{equation}\end{lemma}

Haase, Nill, and Paffenholz \cite{nillhaase} give a purely combinatorial proof of Lemma \ref{combid}. Work of Batyrev and Dais \cite[Corollary 7.10]{bd} can be used to give another proof of Lemma \ref{combid} in which the number 24 explicitly appears as the topological Euler characteristic of a K3 surface.
\begin{proposition}\label{prop:bigcobid}
If $\Delta$ is a reflexive polytope of dimension 3, then (\ref{bigcobid}) is equivalent to (\ref{cobideq}).
\end{proposition}
\begin{proof}
Note that $\ell(F) - 1 = \ell^*(F) + 1$, and expand (\ref{cobideq}) to get
\begin{equation}\label{eq:dias}
24 = \sum_{F \in \Delta[1]}(\ell^*(F) \ell^*(F) + \ell^*(F) +  \ell^*(F^\circ)  +1). 
\end{equation}
The facets, edges and vertices of $\Delta$ and $\Delta^\circ$ form polyhedral decompositions of $S^2$. We let $V_\Delta, E_\Delta$ and $F_\Delta$ denote the number of vertices edges and faces of the decomposition o $S^2$ associated to $\Delta$, and similarly, let $V_{\Delta^\circ}, E_{\Delta^\circ}$ and $F_{\Delta^\circ}$ be the number of vertices, edges and faces associated to the decomposition of $S^2$ coming from $\Delta^\circ$. By the fact that $\Delta$ and $\Delta^\circ$ are dual polytopes, we have:
\begin{equation}\label{eq:euler}
E_{\Delta} = E_{\Delta^\circ}, \quad V_{\Delta} = F_{\Delta^\circ}, \quad F_{\Delta} = V_{\Delta^\circ}.
\end{equation}
Observe that (\ref{eq:dias}) is equivalent to
\begin{equation}\label{reareul}
24 = \sum_{F \in \Delta[1]}\ell^*(F) \ell^*(F^\circ) + \sum_{F \in \Delta[1]}\ell^*(F) + \sum_{F^\circ \in \Delta^\circ[1]} \ell^*(F^\circ) + E_\Delta.
\end{equation}
By Euler's formula and (\ref{eq:euler}), we have that $E_\Delta = V_\Delta + V_{\Delta^\circ}  - 2$. Therefore (\ref{reareul}) is equivalent to
\begin{equation}\label{almostthere}
24 = \sum_{F \in \Delta[1]}\ell^*(F) \ell^*(F^\circ) + \sum_{F \in \Delta[1]}\ell^*(F) + \sum_{F^\circ \in \Delta^\circ[1]} \ell^*(F^\circ) + V_\Delta + V_{\Delta^\circ} - 2.
\end{equation}
The number of points in $\partial \Delta \cap M$ not contained in the interior of a facet of $\Delta$ is $V_\Delta + \sum_{F\in \Delta[1]}\ell^*(F)$, and similarly, $V_\Delta + \sum_{F^\circ \in \Delta^\circ[1]}\ell^*(F^\circ)$ is the number of points in $\partial \Delta^\circ \cap N$ not contained in the interior of a facet of $\Delta^\circ$. Thus
\begin{align*}
V_\Delta + \sum_{F \in \Delta[1]}\ell^*(F) &= \ell(\Delta) - 1 - \sum_{F \in \Delta[2]}\ell^*(F) \\
V_{\Delta^\circ} + \sum_{F^\circ \in \Delta^\circ[1]}\ell^*(F^\circ) &= \ell(\Delta^\circ) - 1 - \sum_{F^\circ \in \Delta^\circ[2]}\ell^*(F^\circ).
\end{align*}
Substituting these equations into (\ref{almostthere}), we get 
\[
24 = \sum_{F\in \Delta[1]} \ell^*(F) \ell^*(F^\circ) + \ell(\Delta) + \ell(\Delta^*) - \sum_{F\in \Delta[2]}\ell^*(F) - \sum_{F^\circ \in \Delta^\circ[2]}\ell^*(F^\circ) - 4
\]
which can be rearranged to produce (\ref{bigcobid}).
\end{proof}
Comparing Proposition \ref{prop:bigcobid} to the formula for $h^{1,1}(\hat{X}_\Delta)$ in Proposition \ref{prop:Xdhn} we obtain the following result.
\begin{corollary}
$$
h^{2,2}(\hat{X}_\Delta) = h^{1,1}(\hat{X}_\Delta) =f^{2,1}(Y_\Delta,w_\Delta) = f^{1,2}(Y_\Delta,w_\Delta).
$$
\end{corollary}


\begin{theorem}\label{thm:weakfanohn}
Let $\Delta$ be a reflexive 3-dimensional polytope, let $\hat{X}_\Delta$ be a weak Fano toric threefold constructed as in Section \ref{sect:constor} and let $(Y_\Delta,w_\Delta)$ be a LG model constructed as in Section \ref{sect:conslg}. Then
\[
f^{3-p,q}(Y_\Delta,w_\Delta) = h^{p,q}(\hat{X}_\Delta).
\]
In other words, $\hat{X}_\Delta$ and $(Y_\Delta,w_\Delta)$ are topologically mirror dual.
\end{theorem}
\begin{remark}
One may apply results of Batyrev \cite{bat2} to give a different proof of Theorem \ref{thm:weakfanohn}, which works in all dimensions (see \cite[Chapter 2]{harthe}). The computations in this section are included as an illustration of how one may use Theorem \ref{thm:3fold} to prove Conjecture \ref{conj:KKP} in concrete situations. Furthermore, the computations in this section will be used to verify certain Hodge number predictions regarding mirror pairs of log Calabi--Yau varieties in forthcoming work of the author in collaboration with Katzarkov and Przyjalkowski. 
\end{remark}
\bibliographystyle{alpha}

\bibliography{LGHN-BIB}

\end{document}